\g@addto@macro{\UrlBreaks}{\UrlOrds}
\providecommand{\noopsort}[1]{} %potrzebne, gdy nazwiska z prefiksami!
\newtheorem{Th}{Theorem}[section]
\theoremstyle{definition}
\newcommand{\beq}{\begin{equation}}
	\newcommand{\eeq}{\end{equation}}
\def\scalar(#1,#2){(#1\mid#2)}
\newcommand{\raz}{\mathbbm{1}}
\newcommand{\zdk}{(Z,{\cal D},\kappa)}
\newcommand{\ot}{\otimes}
\newcommand{\ov}{\overline}
\newcommand{\la}{\lambda}
\newcommand{\R}{{\mathbb{R}}}
\newcommand{\C}{{\mathbb{C}}}
\newcommand{\Z}{{\mathbb{Z}}}
\newcommand{\N}{{\mathbb{N}}}
\newcommand{\E}{{\mathbb{E}}}
\newcommand{\EE}{{\mathbb{E}}}
\newcommand{\vep}{\varepsilon}
\newcommand{\mob}{\boldsymbol{\mu}}
\newcommand{\lio}{\boldsymbol{\lambda}}
\newcommand{\bfu}{\boldsymbol{u}}
	\renewcommand{\bfu}{u}%
\newcommand{\bfv}{\boldsymbol{v}}
\NewDocumentCommand{\pof}{O{} O{} O{P}}{\mathbb{#3}_{#2}\left(#1\right)}
\NewDocumentCommand{\X}{}{\boldsymbol{X}}
\NewDocumentCommand{\x}{}{\boldsymbol{x}}
\NewDocumentCommand{\y}{}{\boldsymbol{y}}
\NewDocumentCommand{\Y}{}{\boldsymbol{Y}}
\NewDocumentCommand{\seq}{m m O{\in} m}{\left(#1_{#2}\right)_{#2  #3 #4}}
\newcommand{\ds}[1][X]{\boldsymbol{\mathcal{#1}}}
\newcommand{\indep}{\perp \!\!\! \perp}
\renewcommand{\amalg}{\indep}
\NewDocumentCommand{\di}{O{p} O{q}}{\mathbb{D}\left(#1 \;\middle|\middle|\; #2\right)}
\NewDocumentCommand{\h}{O{X} O{}}{\mathbb{H}_{#2}\left(#1\right)}
\NewDocumentCommand{\ph}{O{X} O{}}{\mathbb{H}_{#2}\left(\mb{#1}\right)}
\NewDocumentCommand{\pch}{O{X} O{Y}}{\mathbb{H}\left(\mathb{#1} | \mathbf{#2}\right)}
\NewDocumentCommand{\coh}{O{X} O{Y} O{}}{\mathbb{H}_{#3}\left(#1\;|\; #2\right)}
\NewDocumentCommand{\cph}{O{X} O{Y}}{\mathbb{H}\left(\mb{#1}\;|\; \mb{#2}\right)}
\NewDocumentCommand{\mc}{m}{\mathcal{#1}}
\NewDocumentCommand{\cmv}{O{X} O{Y} O{}}{\E_{#3}\left(#1 \middle| #2\right)}
\NewDocumentCommand{\cpof}{O{X} O{Y} O{}}{\mathbb{P}_{#3}\left(#1 \middle| #2\right)}
\newcommand{\iof}[1][]{\mathbf{1}_{#1}}
\NewDocumentCommand{\norm}{O{p} O{TV}}{\left\|#1\right\|_{#2}}
\NewDocumentCommand{\de}{m}{{\em \textbf{#1}}}
\begin{document}
\title{A note on Sarnak processes}
\author{Mariusz\ Lema\'nczyk, Michał D. Lema\'nczyk, Thierry de la Rue}

\maketitle

%\begin{Remark}
%While the combinatorial condition in Corollary B is necessary and sufficient for $\bfu\perp\mathscr{C}_{\rm ze}$, the following condition (resembling more the Chowla conjecture) seems to be only {\bf necessary} even if $\bfu$ takes only $\pm1$ values:
%$$
%\lim_{n\to\infty}\limsup_{M\to\infty}\left|\frac1M\sum_{m\leq M}\bfu(m)\bfu(m+j_1)\ldots\bfu(m+j_r)\right|=0$$
%uniformly in $n\leq j_1<\ldots<j_r$.\end{Remark}

%It follows from Corollaries A and B that the combinatorial condition from Corollary~B~\footnote{This condition  is a generalization of aperiodicty of $\bfu$.} is always {\bf sufficient} for $\bfu$ to be orthogonal to $\mathscr{C}_{\rm ze}$. It would be extremely interesting to verify whether this condition holds for $\mob$ (or $\lio$) (if so, Sarnak's conjecture would be proved) or else, it is of comparable difficulty to the Chowla conjecture.
\begin{abstract}
   Basic properties of stationary processes called Sarnak processes are studied. As an application, a combinatorial reformulation of Sarnak's conjecture on M\"obius orthogonality is provided.
\end{abstract}
\section{Introduction}

    \subsection{Notation and definitions}
        Throughout this paper (unless stressed otherwise) random variables are defined on the probability space $(\Omega, \mathcal{F}, \mathbb{P})$. We consider double sided stochastic processes $\X = \seq{X}{i}{\Z} \in \ds$, where $X_i \in \mathcal{X}$ belong to the common \emph{state space} $\mathcal{X}$ and $\ds$ stands for the state space of $\X$. We assume that the common state space $|\mc{X}|$ is \textbf{finite} (and $\ds\subset \mc{X}^{\Z}$). For the convenience sake, for any sequence $\x = \seq{x}{i}{\Z}$ and $m, n \in \Z$ such that $m \le n$, we will write
        \begin{equation}
           x_{\le n} = \seq{x}{i}[\le]{n}, \qquad x_{< n} = \seq{x}{i}[<]{n} ,\qquad x_m^n = \left(x_i\right)_{m \le i \le n}
        \end{equation}
        Moreover, we extend each function or operation acting on $\mathcal{X}$ to the sequences by applying it coordinate-wise so that, e.g., for any sequences  $\x = \seq{x}{i}{\Z}$ and $\y = \seq{y}{i}{\Z}$, we have
        \begin{equation}
           (\x, \y) = \left(x_i, y_i\right)_{i \in \Z}, \qquad \x \y = \left(x_iy_i\right)_{i \in \Z} \qquad \text{}
        \end{equation}
the latter when $\x$ and $\y$ are complex-valued.

        We denote by $S: \mc{X}^\Z \to \mc{X}^\Z$ the left shift map acting on sequences $\x$ via $S\x = \left(x_{i + 1}\right)_{i \in \Z}$. We say that a process $\X$ is stationary if $\X$ and $S\X$ share the same distribution $\nu$ (for short $\X \sim S \X \sim \nu$). Recall that with every stationary processes we can associate the corresponding dynamical system $(\ds, \nu, S)$ where $\X \in \ds$, $\X \sim \nu$. Conversely, with every given dynamical system $(\ds, \nu, T)$ and a function $f:\ds \to \mathcal{Y}$, we can associate a stationary process (with $\Omega = \ds$), $\Y = \seq{Y}{i}{\Z}$, where $Y_i := f \circ T^i \in \mathcal{Y}$.

         Remembering that $\mc{X}$ is finite, we denote by $\h[X] =  -\sum_{x \in \mc{X}} \pof[X = x]\log \pof[X = x]$ the \de{Shannon entropy} of $X$ (note that in this paper we use base $2$ version of logarithm). If additionally an arbitrary random variable $Y$ is given such that the regular conditional probability $p_{X|Y}$ exists, $\coh[X][Y] = \E_Y \h[X^{(Y)}]$, stands for the \de{conditional Shannon entropy} ($\E_Y$ is the integration with respect to $Y$) and $X^{(y)} \sim p_{X|Y}(\cdot | y)$. For a given stationary process $\X$, $\h[\X] = \coh[X_0][X_{\le - 1}]$ stands for the \de{process entropy}. We say that process is \de{deterministic} (or of\de{ zero entropy}) if $\h[\X] = 0$. Similarly, given a dynamical system $(\ds[X], \nu, T)$ and a finite (measurable) partition $\ds[A] = (\boldsymbol{A}_i)_i$ of $\ds$ we denote by $\h[\nu, T, \ds[A]] = \h[\X_{\ds[A]}]$ the metric entropy corresponding to the partition $\ds[A]$ (here $\X_{\ds[A]}$ stands for the process determined by $f(\x) = i$ iff $\x \in \boldsymbol{A}_i$). For the (topological and metric) entropy theory of dynamical systems (and relations with the entropy of stationary processes), we refer the reader to \cite{Do}, \cite{Wa}.

        The \emph{past tail $\sigma$-algebra} (or \emph{remote past}) of $\X$ is defined by
        \begin{equation}
           \Pi(\X) := \bigcap_{i \ge 0} \sigma(X_{\le -i}).
        \end{equation}
        \begin{rema}{Pinsker $\sigma$-algebra}{}
           The past tail $\sigma$-algebra of $\X$ coincides with the the Pinsker $\sigma$-algebra of the corresponding dynamical system. Recall that the Pinsker $\sigma$-algebra is the largest invariant $\sigma$-algebra such that the corresponding factor of the dynamical system determined by $\nu \sim \X \in \ds$ has zero entropy. In fact, if for any measurable $\boldsymbol{B}$ we denote by $\ds[A]_{\boldsymbol{B}}$ the binary partition generated by $\boldsymbol{B} \subset \ds$ then
           \beq\label{defPi}
           \Pi(\X) = \sigma\left\{\boldsymbol{B} \colon  \h[\nu, S, \ds[A]_{\boldsymbol{B}}]=0\right\},\eeq
        \end{rema}

        In connection with the celebrated Sarnak's conjecture on M\"obius orthogonality (see below), in \cite{Ka-Ku-Le-Ru}, the following notion of Sarnak process has been introduced.

        \begin{defi}{Sarnak process}{d:SP}  A stationary process $\X$ such that $X_i \in \mc{X} \subset \C$, $|\mc{X}| < \infty$,  is called a {\em \textbf{Sarnak process}} if $\cmv[X_0][X_{\le -n}] \xrightarrow{n\to\infty} 0$ in $L^2$. Equivalently, $\cmv[X_0][\Pi(\X)] = 0$.
        \end{defi}
        \begin{rema}{Basic properties of a Sarnak process}{}
           Sarnak processes are centered, that is $\E X_0 = 0$. Moreover, every non-zero Sarnak process is, by the very definition, of positive entropy but it need not be even ergodic\footnote{In fact, the dynamical system given by $\X$ is ergodic if and only if so is its Pinsker factor.} (see Section~\ref{s:BasicProp}). On the other hand, we will show that the class of Sarnak processes is stable under multiplication by deterministic processes.
        \end{rema}

\section{Results}
    In this section we assume that every process $\X$ is \textbf{stationary} and $X_i \in \mc{X} \subset \C$, where $|\mc{X}| < \infty$. Sometimes additionally we assume that $
       \mc{X} = \{-1, 1\}$. Note that in such a case, if $\E X_0 = 0$ (in particular this holds if $\X$ is Sarnak), then $\pof[X_0 = \pm 1] = 1/2$ and hence $X_i$'s are random signs. In this section we present a bunch of results. The most important ones are formulated in~\Cref{theo:p:alaweakB} and~\Cref{theo:p:thierry}. The first one can be viewed as a number theoretic reformulation of Sarnak's conjecture. The second one is of slightly different flavor, namely, it shows that every deterministic process can be embedded as the Pinsker factor in some Sarnak process. Before we present the theorems, let us recall the formulation of Sarnak's conjecture.

    Given an arithmetic function $\bfu:\N\to \mathcal{X}$, where $\mathcal{X}\subset\C$ is finite, and  whose mean
    $$M(\bfu):=\lim_{n\to\infty}\frac1n\sum_{i\leq n}u_i$$ exists and equals $0$, we ask whether
    \beq\label{Sarnakg}
    \lim_{n\to\infty}\frac1n\sum_{i\leq n}f(T^ix)u_i = 0\eeq
    for each zero entropy topological dynamical system $(X,T)$, all $f\in C(X)$ and all $x\in X$. The most known instance of~\eqref{Sarnakg} is  Sarnak's conjecture \cite{Sa} which predicts that \eqref{Sarnakg} holds for $\bfu=\lio$, where $\lio=(\lambda_n)$ stands for the Liouville function ($\lambda_n=1$ if $n$ has an even number of primes divisors counted with multiplicity, and -1 otherwise).
    \begin{rema}{Logarithmic Sarnak's conjecture}{}
       The logarithmic version, originated by Tao \cite{Ta1}, in which instead of Ces\`aro averages $1/n\sum_{i\leq n}a_i$ we consider $1/ \log n\sum_{i\leq n}a_i/i$ is equally interesting. In fact, Tao proved that the logarithmic Sarnak's conjecture is equivalent to the logarithmic Chowla conjecture.
    \end{rema}
    \begin{rema}{}{}
       Originally, Sarnak's conjecture has been formulated for the M\"obius function, $\bfu=\mob$, but it is known that Sarnak's conjecture for $\lio$ and $\mob$ are equivalent, \cite{Fe-Ku-Le}, \cite{Ta1}.
    \end{rema}
    \subsection{Main results}
        Our first result links the notion of Sarnak process with the classical concept of weakly Bernoulli processes (see e.g.\ \cite{Sh}). Recall that  a stationary process $\X \sim \nu$ is \de{weakly Bernoulli}  if, given $\vep>0$, there is a gap $g\geq0$ such that for each $n\geq1$ and $m\geq0$,
        $$
           \E_{X_{-g-m}^{-g}}\norm[\cpof[X_{0}^{n - 1} \in \cdot][X_{-g-m}^{-g}] - \pof[X_{0}^{n - 1} \in \cdot]]<\vep,$$
        where $\E_{X_{-g-m}^{-g}}$ denotes the integration with respect to ${X_{-g-m}^{-g}}$ and, for any complex measure $\mu$ defined on a $\sigma$-algebra $\mc{A}$,
        \begin{equation}
           \norm[\mu] := \sup_{A \in \mc{A}} |\mu(A)| + |\mu(A^c)|
        \end{equation}
        stands for the total variation of measure $\mu$.
        It turns out that in the special case of $X_i \in \{-1, 1\}$, if we do not require in the above condition uniformity in $n$ then we recover the class of Sarnak processes.

        \begin{theo}{Total variation formulation of the Sarnak property}{p:alaweakB}
           Let $\X$ be a stationary process taking values $\pm1$. Then $\X$ is Sarnak if and only if, given $\vep>0$, there is a gap $g\geq0$ such that for  all $m\geq0$,
           \beq\label{WB1}
           \E_{X_{-g-m}^{-g}}\norm[\cpof[X_{0} \in \cdot][X_{-g-m}^{-g}] - \pof[X_{0} \in \cdot]]<\vep.\eeq
        \end{theo}

        Let us give some intuition  behind \eqref{WB1}. Think about large gap $g$. Then \eqref{WB1} can be roughly be interpreted as follows. Almost none of typical realizations of block in the far past $X_{-g-m}^{-g} = x_{-g-m}^{-g}$ affects the probability of occurrence of given sign at a given non-negative coordinate.  Note that  Sarnak's conjecture is of similar flavour, namely, it roughly states that the Liouville function (counterpart of our $\X$) is uncorrelated with deterministic sequences (thus, in some sense, Liouville function behaves like a random sign). Thus, there is no surprise that as a corollary of \Cref{theo:p:alaweakB}, we obtain the following description of Sarnak's conjecture.
        %		
        %		  Let us now pass to an application of the main result.
        %		Let $\bfu:\n\to\pm1$  and $\kappa\in V(\bfu)$ where, more precisely, let $\lim_{k\to\infty}\frac1{N_k}\sum_{n\leq N_k}\delta_{S^n\bfu}=\kappa$.
        %		Given $q\in\{-1,1\}^m$, let
        %		. Then, we have
        %		$$
        %		\lim_{k\to\infty}\frac1{N_k}|A(q,N_k\}|=\kappa([q]_0),$$
        %		where $[q]_s:=\{y\in\{-1,1\}^{\Z}\colon y[s,s+m-1]=q\}$ for each $s\in\Z$.
        %		Given $g\geq1,m\geq 0$, set
        %		
        %		(the limit exists and equals $|\kappa([1]_0\cap S^g[q]_0)-\frac12\kappa([q]_0)|$). The following result answers the question~\eqref{pytanie}.
        %		
        %		
        %		\begin{Cor}\label{t:main}(Cancellation requirement)
        %		\end{Cor}
        %		Note that if $\bfu$ satisfies the Chowla conjecture then $\kappa=\mc{B}\left(1/2, 1/2\right)$ and $\vep_{g,m}(q)=0$ for all $m$ and $q$ as above.

        \begin{coro}{Number-theoretic reformulation of Sarnak's conjecture}{t:main}
           Let $\bfu:\N\to\pm1$. Then $\bfu$ satisfies Sarnak's conjecture (i.e.~it satisfies~\eqref{Sarnakg}) if and only if for every increasing sequence $\seq{n}{k}{\N} \in \N^\N$ such that  $1/ n_k \sum_{0 \le n\leq n_k - 1}\delta_{S^n\bfu}\overset{k \to \infty}{\Rightarrow}\kappa$ and for each $\vep>0$ there exists a gap $g\geq1$ such that for all $m \ge 0$,
           $$
              \sum_{q\in\{-1,1\}^m}\vep_{g}(q)<\vep,$$
           where for any block $q \in \{-1, 1\}^m$, $$
              \vep_{g}(q):=\lim_{k\to\infty}\frac1{n_k}\Big||\{n + g\in A(q,n_k) \colon u_n =1\}|-\frac12|A(q,n_k)|\Big|$$
           and $
              A(q,n) :=\{i\leq n\colon u_i^{i + m -1}=q\}$
           is the set of appearances of the block $q$ in $u_n^{n + m -1}$.

        \end{coro}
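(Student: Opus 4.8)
The plan is to chain together three equivalences. The first is the characterization introduced together with the notion of Sarnak process in \cite{Ka-Ku-Le-Ru}: $\bfu$ satisfies \eqref{Sarnakg} if and only if, for every weak-$*$ limit $\kappa$ of the empirical measures $\frac1{n_k}\sum_{0\le n<n_k}\delta_{S^n\bfu}$ (the quasi-generic measures, whose collection we write $V(\bfu)$), the coordinate process $\X$ is a Sarnak process under $\kappa$. The second is the total variation criterion of \Cref{theo:p:alaweakB}, which says that under $\kappa$ the process $\X$ is Sarnak iff \eqref{WB1} holds for all $m$. The third, the only step requiring real work inside this proof, is a direct identification of the integral in \eqref{WB1} with the combinatorial sum $\sum_q\varepsilon_g(q)$; the first ingredient is quoted and the second is already established.

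First I would fix a subsequence with $\frac1{n_k}\sum_{0\le n<n_k}\delta_{S^n\bfu}\Rightarrow\kappa$ and turn frequencies into $\kappa$-probabilities. As every event below is a cylinder, weak convergence gives $\frac1{n_k}|A(q,n_k)|\to\mathbb{P}_\kappa(X_0^{m-1}=q)$ and $\frac1{n_k}|\{n+g\in A(q,n_k)\colon u_n=1\}|\to\mathbb{P}_\kappa(X_0=1,\,X_g^{g+m-1}=q)$, the $O(g)$ boundary terms being negligible after division by $n_k$. By stationarity $\mathbb{P}_\kappa(X_0^{m-1}=q)=\mathbb{P}_\kappa(X_g^{g+m-1}=q)$, so
\[
\varepsilon_g(q)=\bigl|\mathbb{P}_\kappa(X_0=1,\,X_g^{g+m-1}=q)-\tfrac12\,\mathbb{P}_\kappa(X_g^{g+m-1}=q)\bigr|.
\]
Taking $m=0$ yields $\varepsilon_g(\emptyset)=\tfrac12|\E_\kappa X_0|$, so the smallness hypothesis for all $m$ forces $\mathbb{P}_\kappa(X_0=1)=\tfrac12$; conversely a Sarnak process is centred, giving the same. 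With $\mathbb{P}_\kappa(X_0=1)=\tfrac12$ in hand I would factor out the block probability and apply the two-point identity $\|\nu\|_{TV}=2|\nu(\{1\})|$, valid for any difference $\nu$ of two probability measures on $\{-1,1\}$, to obtain
\[
\sum_{q\in\{-1,1\}^m}\varepsilon_g(q)=\tfrac12\,\E_{X_g^{g+m-1}}\bigl\|\mathbb{P}_\kappa(X_0\in\cdot\mid X_g^{g+m-1})-\mathbb{P}_\kappa(X_0\in\cdot)\bigr\|_{TV}.
\]

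The subtle point is that this integral conditions $X_0$ on the \emph{future} block $X_g^{g+m-1}$, whereas \eqref{WB1} conditions $X_0$ on the \emph{past} block $X_{-g-m}^{-g}$. I would reconcile the two using the Pinsker remark: the remote past $\Pi(\X)$ and the remote future $\bigcap_{i\ge0}\sigma(X_{\ge i})$ both coincide with the Pinsker $\sigma$-algebra, so $\E_\kappa(X_0\mid\Pi(\X))=0$ is equivalent to its time-reversed version. Hence $\X$ is Sarnak under $\kappa$ iff the reversed process $\check\X=(X_{-i})_i$ is Sarnak, and applying \Cref{theo:p:alaweakB} to $\check\X$ converts the past-conditioning of \eqref{WB1} into exactly the future-conditioning integral above (with block length $m+1$ in place of $m$). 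Since both the theorem and the corollary quantify over \emph{all} $m\ge0$, this shift of index and the harmless factor $\tfrac12$ can be absorbed into $\varepsilon$.

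Assembling the chain: $\bfu$ satisfies \eqref{Sarnakg} iff every $\kappa\in V(\bfu)$ yields a Sarnak process, iff for every such $\kappa$ and every $\varepsilon$ there is a gap $g$ making \eqref{WB1} hold for all $m$ (\Cref{theo:p:alaweakB} applied to $\check\X$), iff for every such $\kappa$ the sum $\sum_q\varepsilon_g(q)$ is eventually $<\varepsilon$. I expect the genuine obstacle to be conceptual rather than computational: getting the quantifier order right (a single $g$ working for all $m$, read along the chosen $(n_k)$) and correctly handling the past/future reversal through the symmetry of the Pinsker factor. The frequency-to-probability passage and the two-point total variation computation are then routine.
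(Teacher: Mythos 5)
Your proposal is correct and follows essentially the same route as the paper: translate block frequencies along $(n_k)$ into $\kappa$-probabilities of cylinders, identify $\varepsilon_g(q)$ with $\bigl|\tfrac12\mathbb{P}_\kappa(X_g^{g+m-1}=q)-\mathbb{P}_\kappa(X_0=1,X_g^{g+m-1}=q)\bigr|$, and invoke \Cref{theo:p:alaweakB} together with the equality of past and future tail $\sigma$-fields (which the paper also uses, via a remark, to justify conditioning on the future rather than the past). Your write-up merely makes explicit two points the paper leaves implicit, namely the reduction of $\sum_q\varepsilon_g(q)$ to the total variation integral of \eqref{WB1} via $\mathbb{P}_\kappa(X_0=1)=\tfrac12$, and the role of the Veech-type equivalence from the cited work of Kanigowski, Ku\l aga-Przymus, Lema\'nczyk and de la Rue.
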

        \begin{rema}{Magical constant $1/2$ in the definition of $\vep_{g}$}{}
           Note that the constant $1/2$ appearing in the definition of $\vep_{g}$ corresponds to the fact that we randomly choose a sign ($\vep$-independently of the given remote past).
        \end{rema}
        \begin{rema}{Logarithmic version}{}
           Logarithmic version of \Cref{coro:t:main} holds as well. It suffices to replace all arithmetic weights  by their logarithmic counterparts.
        \end{rema}

        One may wonder under what assumptions put on $\Y$ the following statement holds: if $\X$ is Sarnak then so is $\X \Y$. The following theorem states that this statement is valid if we consider the class of zero entropy processes.
        \begin{theo}{Closure under multiplication by a deterministic process}{t:mnozenie}
           Let $(\X, \Y) = \left(X_i, Y_i\right)_{i \in \Z}$ be a stationary process. If $\X$ is Sarnak and $\Y$ is deterministic, then $\X\Y$ is  Sarnak.
        \end{theo}

        %        Hence if $(\X, \Y)$ is stationary $\X$ is Sarnak and $\Y$ has zero entropy then $\X\Y$ is Sarnak. Slightly surprisingly our next theorem states that in such a situation we can reconstruct $\Y$ from $\X \Y$ as soon as we additionally assume that $Y_i \in \{0, 1\}$.
        %        \begin{theo}{Retrieval of deterministic process from its product with Sarnak one}{}
        %           Suppose that $(\X, \Y)$ is a stationary process such that $X_i \in \{-1, 1\}$ is Sarnak and $\Y$ is deterministic and non-trivial, that is $0 < \pof[Y_0  = 1] < 1$. Then $\Y$ is measurable with respect to (Sarnak process) $\X \Y$. In other words, we can retrieve $\Y$ from $\X\Y$, that is there is a function $f$ such that $\X = f(\X\Y)$.
        %        \end{theo}
        %        \begin{rema}{Retrieval of deterministic process from its product with $K$-mixing one}{}
        %           By result of Weiss, ..., if $(\X, \Y)$ is stationary, $\X$ is $K$-mixing and $\Y$ is non-trivial of zero entropy then we can reconstruct $\Y$ from $\X\Y$. Thus,
        %           the above theorem can be viewed as version of Weiss result with $K$-mixing property replaced by the Sarnak one.
        %        \end{rema}

        The next fact states that each deterministic system can be realized as the Pinsker factor of some Sarnak process. It shows as well that Sarnak processes $\X$ (even restricted to the case of $\mathcal{X}=\{-1,1\}$) are ubiquitous.
        \begin{theo}{Embedding of a deterministic process into a Sarnak one}{p:thierry}
           For each aperiodic zero entropy dynamical system $R$ there is a $\pm1$-valued Sarnak process $\X$ such that the Pinsker factor of the dynamical system given by $\X$ is (measure-theoretically) isomorphic to $R$.
        \end{theo}

While Sarnak's processes need not be ergodic, see e.g.\ \Cref{exam:ex:non_erg}, in fact, we have the following result.
\begin{theo}{}{t:EDM} Let $\X$ be a stationary process. Then $\X$ is Sarnak if and only if the stationary processes in its ergodic decomposition are a.a.\ Sarnak.\end{theo}

\begin{coro}{}{c:EDT} Let $\X$ be a Sarnak process. Then for a.a.\ realizations $(X_n(\omega))$ of the process, the sequence $(X_n(\omega))$ is orthogonal to all topological dynamical systems of zero entropy.\end{coro}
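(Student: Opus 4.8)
The plan is to reduce to the ergodic case by means of \Cref{theo:t:EDM} and then to run a generic-point/joining argument. Let $\nu$ be the law of $\X$, realized on the compact space $\ds\subset\mc{X}^\Z$ with the shift $S$. By Birkhoff's theorem the empirical measures $\frac1N\sum_{n<N}\delta_{S^n x}$ converge, for $\nu$-a.a.\ $x$, to the ergodic component $\mu_x$ of $x$; and by \Cref{theo:t:EDM}, since $\X$ is Sarnak, $\mu_x$ is (the law of) an ergodic Sarnak process for $\nu$-a.a.\ $x$. Fix any $x$ in this full-measure set; it then suffices to show that $x$ is orthogonal to every zero-entropy topological system. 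Given such a system $(Y,T)$, $f\in C(Y)$ and $y\in Y$, replace $Y$ by the orbit closure of $y$ and push forward by $z\mapsto(f(T^n z))_n$: the sequence $w:=(f(T^n y))_n$ becomes a point of a subshift $(W,\sigma)\subset\C^\Z$ with compact alphabet $f(Y)$, which, as a topological factor of a zero-entropy system, has zero topological entropy. Thus I only need $\frac1N\sum_{n<N}w_n x_n\to0$.

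For this I consider, on the compact space $W\times\ds$, the measures $\Lambda_N:=\frac1N\sum_{n<N}\delta_{(\sigma^n w,\,S^n x)}$. Since $(w,x)\mapsto w_0x_0$ is continuous, $\int w_0x_0\,d\Lambda_N=\frac1N\sum_{n<N}w_n x_n$, so it is enough to prove that every weak-$*$ limit $\Lambda$ of $(\Lambda_N)$ satisfies $\int w_0x_0\,d\Lambda=0$. Any such $\Lambda$ is $(\sigma\times S)$-invariant; its $\ds$-marginal is $\mu_x$ because $x$ is $\mu_x$-generic, and its $W$-marginal is a $\sigma$-invariant measure $\mu_W$, necessarily of zero (measure-theoretic) entropy by the variational principle. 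Hence $\Lambda$ is a joining of the ergodic Sarnak system $\mu_x$ with a zero-entropy system.

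The conclusion then rests on the structural fact that the Pinsker factor governs all couplings with zero-entropy systems: the coordinate $\sigma$-algebra $\mathcal B_W$ of $W$ is a zero-entropy factor of $(W\times\ds,\Lambda,\sigma\times S)$, hence lies in its Pinsker $\sigma$-algebra, and consequently any observable $h$ on the $\ds$-coordinate with $\E_{\mu_x}(h\mid\Pi(\mu_x))=0$ is orthogonal to $\mathcal B_W$ in $L^2(\Lambda)$ (equivalently, the two coordinate $\sigma$-algebras are relatively independent over $\Pi(\mu_x)$). Applying this to the coordinate observable $h(x)=x_0$, the Sarnak property of $\mu_x$, namely $\E_{\mu_x}(x_0\mid\Pi(\mu_x))=0$, yields, since $w_0$ is $\mathcal B_W$-measurable,
$$\int w_0x_0\,d\Lambda=\int w_0\,\E_\Lambda(x_0\mid\mathcal B_W)\,d\Lambda=0.$$
As every subsequential limit gives $0$, the full average tends to $0$, completing the ergodic case and, by the reduction above, the corollary.

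I expect the main obstacle to be twofold. The conceptually delicate point in the reduction is \emph{uniformity}: a single full-measure set of realizations must work simultaneously for all (uncountably many) zero-entropy systems, which is exactly why the argument is arranged to use nothing about $x$ beyond its being generic for $\mu_x$. The genuine technical heart, however, is the relative-independence-over-$\Pi$ lemma for joinings with zero-entropy systems; this is the measure-theoretic expression of the principle that ``zero-entropy systems only see the Pinsker factor'' and is essentially the mechanism behind the definition of a Sarnak process in \cite{Ka-Ku-Le-Ru}. Establishing (or precisely citing) that lemma, together with the passage from a general orbit sequence $(f(T^n y))$ to a point of a zero-entropy subshift, is where the real work lies.
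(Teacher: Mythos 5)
Your proof is correct, and its skeleton matches the paper's: reduce to the ergodic case via \Cref{theo:t:EDM} together with the fact that a.a.\ realizations are generic for their (Sarnak) ergodic component. Where you diverge is in the final step. The paper simply observes that for an ergodic Sarnak component the (unique) Furstenberg system of the realization satisfies the Veech condition $\pi_0\perp L^2(\Pi)$, and then invokes \cite{Ka-Ku-Le-Ru} for the implication ``Veech condition $\Rightarrow$ orthogonality to all deterministic sequences.'' You instead unpack that implication: empirical measures on the product give a joining of the ergodic Sarnak system with a zero-entropy system, and relative independence over the Pinsker factor kills the correlation. This is the standard (and correct) proof of the easy direction of Veech's conjecture; the ``relative-independence-over-$\Pi$'' lemma you single out as the genuine technical heart is not something you need to establish from scratch --- it is exactly Theorem~\ref{t:character} of this paper (quoted from \cite{Gl-Th-We}, \cite{Le-Pa-Th}, see also \cite{Ru}), already used in the proof of \Cref{theo:t:mnozenie}, so a precise citation suffices. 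Your version buys self-containedness at the cost of some routine compactness bookkeeping (one could even skip the passage to the subshift $W$ and join $\mu_x$ directly with a limit of $\frac1{N_k}\sum_{n<N_k}\delta_{T^ny}$ on $\ds\times Y$); the paper's version is shorter but leans on an external black box. Your remark on uniformity --- that the full-measure set depends only on genericity of $x$ for $\mu_x$, not on the target system --- is exactly the right point and is implicit in the paper's argument as well.
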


\section{Motivation}

    %we show that each ergodic zero entropy dynamical system is realized (up to isomorphism of dynamical systems) as the remote past of a Sarnak process, see Example~\ref{e:p3}.
    \subsection{Chowla conjecture}
        Sarnak's conjecture is motivated by the Chowla conjecture \cite{Ch} from 1965 about vanishing of all auto-correlations of $\lio$:
        \beq\label{Chowla}
        \lim_{n\to\infty} \frac1n\sum_{i\leq n}\lambda_{n}\lambda_{n+r_1}\cdots\lambda_{n+r_k}=0\eeq
        for each choice of $1\leq r_1<\ldots<r_k$.
        \begin{rema}{Chowla conjecture implies Sarnak's conjecture}{}
           The implication \eqref{Chowla}~$\Rightarrow$~\eqref{Sarnakg} (for $\bfu=\lio$) has been shown by Sarnak.
           See also the post \cite{Ta} for the original proof by Sarnak and \cite{Ab-Ku-Le-Ru} for a dynamical proof of this implication.
        \end{rema}
        It was noticed by Sarnak \cite{Sa} that Chowla conjecture has a dynamical reformulation, indeed,~\eqref{Chowla} is equivalent to the fact that $\lio$ is a generic point for the Bernoulli measure $\mc{B}\left(1/2, 1/2\right)$ on the full shift $\{-1,1\}^{\Z}$, see \cite{Ab-Ku-Le-Ru}, \cite{Fe-Ku-Le}.
    \subsection{Why Sarnak processes are interesting from the analytic number theory point of view?}
        As we have seen the Chowla conjecture can be seen from both: number-theoretic (combinatorial) and dynamical points of view, while Sarnak's conjecture, by its nature, is a purely dynamical statement. The natural question arises:
        \beq\label{pytanie}
        \mbox{\em Is there a number-theoretic reformulation of Sarnak's conjecture?}
        \eeq
        This question has been partially answered in \cite{Ka-Ku-Le-Ru}, where Veech's conjecture has been proved. Namely, it has been proved that Sarnak's conjecture is equivalent to the fact that for each Furstenberg system $(X_{\lio},\kappa,S)$ of $\lio$, the corresponding stationary process $\boldsymbol{\pi}=(\pi_n)$ is Sarnak (in the sense of Definition~\ref{defi:d:SP}), where $\pi_n(\x) = x_n$.
        \begin{rema}{Dynamical approach (Furstenberg systems), see e.g.\ \cite{Fr-Ho} and the surveys \cite{Fe-Ku-Le}, \cite{Ku-Le}}{}
           Let us only mention that in this approach through Furstenberg systems  $\bfu$ is treated as a two-sided sequence, for example via $u_{-n} := u_n$ and we consider the subshift $X_{\bfu}:=\ov{\{S^k\bfu\colon k\in\Z\}}\subset \mathcal{X}^{\Z}$. The set of shift invariant measures $\kappa$ obtained as weak$^\ast$-limits of $\frac1{n_k}\sum_{i\leq n_k}\delta_{S^i\bfu}$, $k\geq1$, is denoted by $V(\bfu)$ and each such measure makes the coordinate projection process $\boldsymbol{\pi}=(\pi_0\circ S^n)$ stationary.

        \end{rema}
        \begin{rema}{Mixing property approach, see \cite{Ka-Ku-Le-Ru}}{}
           In this approach the condition $\pi_0\perp L^2(\Pi(\boldsymbol{\pi}))$ is equivalent to a (``relative'') uniform mixing property  of the function $\pi_0$ which finally yields a certain combinatorial (hardly satisfactory) condition on $\bfu$. Let us add that the idea is borrowed from the classical ergodic theory, namely, we use the classical fact that so called Kolmogorov automorphisms are those satisfying the uniform mixing property, see e.g.\ \cite{Co-Fo-Si}.
        \end{rema}

        The main aim of this note is to consider the special case $\bfu \in \{-1, 1\}^\Z$. Sarnak processes resulting from this situation satisfy $\pi_0\amalg \Pi(\boldsymbol{\pi})$ which allows us to use some entropy techniques and to obtain
        a rather clear combinatorial condition given in \Cref{coro:t:main}. Note finally that if in \Cref{coro:t:main}, given a Furstenberg system $\kappa\in V(\bfu)$, for some $g\geq1$ we have $\vep_g(q)=0$ for all blocks  $q$ then for the stationary process $(\pi_n)$, we have $\pi_0
        $ is independent from the $\sigma$-algebra generated by $(X_g,X_{g+1},\ldots)$. In particular, the Chowla conjecture holds if and only if $\vep_1(q)=0$ for all blocks $q$.

\section{Examples of Sarnak processes}
    Firstly, let us make some remarks about properties of the class of Sarnak processes. It follows from the very definition that for each Sarnak process $\X=(X_n)$, we have $\EE X_0=0$. Moreover, it has positive entropy, as otherwise $X_0\in L^2(\Pi(\X))$, whence $X_0=0$.  By the Rokhlin-Sinai theorem, see e.g.\ \cite{Wa} Thm.\ 4.36, the spectral measure $\sigma_{X_0}$ of a Sarnak process $\X$, i.e.\ the (symmetric, Borel, positive) measure determined by $\widehat{\sigma}_{X_0}(n)=\EE X_n\ov{X}_0$, $n\in\Z$, has to be absolutely continuous.  Therefore, if the spectral measure of a stationary process is not absolutely continuous, the process is not Sarnak. To see positive entropy processes whose spectral measure is partly singular notice that (contrary to the multiplication, cf.~\Cref{theo:t:mnozenie}), Sarnak processes are not stable under addition of deterministic processes. Indeed, suppose that $\X$ is a Sarnak process, and let $\Y=(Y_n)$ be a (finite-valued) centered, stationary process with zero entropy such that the dynamical systems generated by these processes are disjoint.\footnote{For example, we can take for $\X$ a Kolmogorov process and for $\Y$ a finite valued process representing an irrational rotation on the circle.} Then, in view of \cite{Fu}, the process $\X+\Y=(X_n+Y_n)$ is generating for the Cartesian product of the dynamical systems corresponding to  $\X$ and $\Y$, so it has positive entropy. On the other hand, the process $\X+\Y$ is not Sarnak, as $X_0+Y_0$ is not orthogonal to $Y_0$, so it is not orthogonal to $L^2(\Pi(\X+\Y))$. Moreover, the spectral measure of the process $\X+\Y$ is $\sigma_{X_0}+\sigma_{Y_0}$. Varying $\Y$, we can now obtain a positive entropy non-Sarnak process whose spectral measure is partly singular, as well as a process whose spectral measure is Lebesgue.

    \begin{exam}{When factors of a Sarnak process are Sarnak?}{}
       Assume that $\X$ is a Sarnak process. One can ask which functions (depending on finitely\footnote{If we admit infinitely many coordinates then the Pinsker factor can be expressed as a function of
       a generating process.} many coordinates) of this process yield Sarnak processes? To get a partial answer to this question, assume that $f:\R^k\to\R$ is measurable and let $\Y=(Y_n)$ with $$Y_n:=f(X_n,X_{n+1},\ldots,X_{n+k-1}), \;n\in\Z.$$ Note that
       $$
          \sigma(Y_{-n},Y_{-n-1},\ldots)\subset \sigma(X_{-n+k-1},X_{-n+k-2},\ldots),$$ whence $\Pi(\Y)\subset \Pi(\X)$. If the function
       $f$ is linear then $Y_0\perp L^2(\Pi(\X))$. Hence, under linearity, $\Y$ is Sarnak. A similar argument shows that, for any $k \ge 1$, the jumping process $(X_{kn})_n$ is Sarnak.

    \end{exam}

    \begin{rema}{}{}
       A large source of examples of Sarnak  processes is given by the replacement of the orthogonality requirement in \Cref{defi:d:SP} with the independence condition: $X_0 \amalg \Pi(\X)$. Note that each $\pm1$-valued Sarnak process satisfies this property. Take, for example, any case in which  $\Pi(\X)$ is trivial, i.e.\ $\X$ is Kolmogorov. Then $\X$ is  Sarnak as soon as it is centered.
    \end{rema}

    \begin{exam}{Non-ergodic Sarnak process}{ex:non_erg}
       Take the Bernoulli process $\mc{B}\left(1/2, 1/2\right)$ on $\{-1,1\}^{\Z}$ and let $\bfu$ be any generic sequence for it. Let $\bfv\in\{0,1\}^{\N}$ be any generic point for a zero entropy measure. Then, in view of \Cref{theo:t:mnozenie},  $\bfu\bfv$ generates a Sarnak process and since $(\bfu \bfv)^2=\bfv$, clearly, the zero entropy system is a factor of the system given by $\bfu\bfv$, and the zero entropy system need not be ergodic.   In fact, as proved in  \cite{Ku-Le(jr)} (using \cite{Fu-Pe-We}), once $\X$ is Kolmogorov, and $\Y$ is deterministic with $\mathcal{Y}=\{0,1\}$, we can always filter out the zero entropy system from the dynamical system given by $\X\Y$.\footnote{
          Assuming the Chowla conjecture, the above applies to $\mob=\lio\cdot\mob^2$ as $\mob^2$ is generic for a zero entropy measure. As Sarnak's conjecture for $\mob$ and $\lio$ are equivalent,  e.g. \cite{Fe-Ku-Le}, the processes determined by $\lio$ are Sarnak iff the processes determined by $\mob$ so are.}\end{exam}

    \begin{exam}{Sarnak process with non-trivial Pinsker $\sigma$-algebra}{e:p2} We will show that there are $\pm1$-valued Sarnak processes with non-trivial remote past.

       Consider the space $\{0,1\}$ with $\mc{B}\left(1/2\right)$ measure and $Ti= 1 - i$, and the full shift $S$ on $\{-1,1\}^{\Z}$ considered with the Bernoulli measure $\mc{B}\left(1/2, 1/2\right)$. Let $\widetilde{T}$ be the corresponding skew product (considered with the product measure)
       $$
          \widetilde{T}(i,\x):=(1 -i ,S^{i}\x).$$
       Clearly, $\widetilde{T}$ has $T$ as factor, so its Pinsker algebra is non-trivial.
       Let $Y_0(i,\x):=x_0$. We claim that the process $(Y_n)$, with $Y_n:=Y_0\circ \widetilde{T}^n$, is Sarnak.
    \end{exam}
    \begin{rema}{}{}
       Note that $T$ produces sequences of alternating $0$'s and $1$'s. In the preceding example, we have $I \sim \mc{B}(1/2) \amalg \boldsymbol{B}\sim \mc{B}(1/2, 1/2)$. If $\boldsymbol{Z}$ is given by $Z_i = B_{\lfloor i /2 \rfloor}$, then $\Y$ is given by  $Y_i = Z_{i + I}$ for %$ i>0$ and $Y_0 = Z_0$.
       every $i\in\Z$.
    \end{rema}
    \begin{proof}[Proof of the statement from \Cref{exam:e:p2}]
       We need three facts:
       \begin{align}
          \label{ps1}
           & Y_0\perp L^2(\sigma(I)) \\ %  L^2(\{0,1\})                                           \\
          \label{ps2}
           & (Y_n)\text{ generates the full $\sigma$-algebra}                 \\
          \label{ps3}
           & \Pi(\widetilde{T})=\text{the first coordinate $\sigma$-algebra}.
       \end{align}
       While \eqref{ps1} is obvious, to establish~\eqref{ps2} note that the consecutive values of the process at $(0,x)$ are:
       $$
          Y_0(0,x)=x_0,Y_1(0,x)=x_0,Y_2(0,x)=x_1,Y_3(0,x)=x_1, Y_4(0,x)=x_2,
          Y_5(0,x)=x_2,\ldots,
       $$
       so it is clear that the process separates points and therefore~\eqref{ps2} holds. Finally, note that $\widetilde{T}^2$ is the Cartesian product of two factors: of the fist coordinate $\sigma$-algebra (on which that action is the identity) and of the second coordinate $\sigma$-algebra on which it acts as the shift $S$ (indeed, $\widetilde{T}(\{0,1\}\times\{x\})=
          \{0,1\}\times\{Sx\}$). The latter is just the Bernoulli $\mc{B}\left(1/2, 1/2\right)$, and since the Pinsker $\sigma$-algebra of the product is the product of Pinsker $\sigma$-algebras, we see that $\Pi(\widetilde{T}^2)$=first coordinate $\sigma$-algebra. To conclude, i.e.~to obtain~\eqref{ps3}, it is enough to notice that the Pinsker $\sigma$-algebras of non-zero powers of an automorphism are all equal.
    \end{proof}
    \begin{rema}{Combinatorial intuition for the above example}{}
       Suppose that $\bfu$ satisfies~\eqref{Sarnakg}. Set $\boldsymbol{v}$ via
       $$
          v_{2n}= v_{2n + 1} :=u_{n}\text{ for all }n\in\Z.$$
       Then $\bfv$ also satisfies~\eqref{Sarnakg}, as $T^2$ has also zero entropy,
       $$
          \frac1N\sum_{n\leq N}f(T^nx)v_n=\frac1N\sum_{m\leq N/2}u_m(f(T^{2m}x)+f(T^{2m+1}x))+o(1)\to0\text{ when }N\to\infty.$$

       It follows that the Furstenberg systems of $\bfv$ yield Sarnak processes. However, if we consider the subshift $X_{\bfv}:=\ov{\{S^n\bfv\colon n\in\Z\}}$ then for each  $y\in X_{\bfv}$ either \\(a) $y_{2n}=y_{2n+1}$ for all $n\in\Z$ holds or\\ (b) $y_{2n}=y_{2n-1}$ for all $n\in\Z$ is satisfied.\\
       Note that if a point $y$ satisfies (a) and (b) simultaneously, then $y$ is a fixed point. Notice that no Furstenberg system of $\bfv$ gives positive measure to the set of fixed points.\footnote{This is a property of all Sarnak processes not taking the value zero. Indeed, let $F$ be the set of fixed points of the process $\X$ (it is a finite  set naturally identified with a subset of $\mathcal{X})$. Of course this set belongs to $\Pi(\X)$, moreover, each subset of $F$ is $\Pi(\X)$-measurable. It follows that the function $X_0\cdot \raz_{F}$ is $\Pi(\X)$-measurable.
          Hence $\EE(X_0\cdot \ov{(X_0\cdot\raz_F)})=0$. On the other hand, this integral equals $\int_F|X_0|^2>0$, a contradiction.} Then if $A\subset X_{\bfv}$ denote the set of point satisfying (a) and $B$ stands for the set of points satisfying (b), then $A\cap B=\emptyset$, $B=SA$, whence -1 is an eigenvalue of the dynamical system given by any Furstenberg system of $\bfv$, whence the remote past cannot be trivial.
    \end{rema}

\section{Proofs} \label{s:BasicProp}
    The organization is as follows. Each section containing a proof of one of our results is preceded by
   a section which provides some necessary background.
    \subsection{Background for \Cref{theo:p:alaweakB}}
        In this part all random variables take only \textbf{finitely many values}. For any random variable $X$, we denote by $p_X$ its distribution. The main goal of this section is to prove the following fact.
        \begin{lemm}{Pinsker two-sided inequality in finite state case}{l:mich}
           Fix random variables $X, Y$. Define  $1/\beta := \sup_{x, y}\frac{p_{X, Y}(x, y)}{p_X(x)p_Y(y)}$. Then
           \begin{equation}\label{it7}
              \frac{2\sqrt{\beta}}{\log e}\left[\mathbb{H}(X) - \mathbb{H}(X|Y)\right] \le			\E\norm[p_{X | Y}(\cdot | Y) - p_X] \le \sqrt{2\left[\mathbb{H}(X) - \mathbb{H}(X|Y)\right]}.
           \end{equation}
        \end{lemm}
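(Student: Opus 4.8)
The plan is to translate the statement into information theory and then prove the two bounds separately: the right‑hand one is Pinsker's inequality, the left‑hand one a reverse Pinsker estimate powered by the likelihood‑ratio bound $\beta$. Put $P:=p_{X,Y}$ and $Q:=p_X\otimes p_Y$ on the finite set $\mc{X}\times\mathcal{Y}$; note that $Q(x,y)=0$ forces $P(x,y)=0$, so the likelihood ratio $t:=P/Q$ is well defined on the support of $P$ and satisfies $0\le t\le 1/\beta$. Since each signed measure $p_{X|Y}(\cdot\mid y)-p_X$ has total mass $0$, its $\norm[\cdot]$ (in the sense of the present paper) equals its $\ell^1$ norm, and integrating in $Y$ gives
\[
   \E\norm[p_{X|Y}(\cdot\mid Y)-p_X]=\sum_{x,y}\bigl|p_{X,Y}(x,y)-p_X(x)p_Y(y)\bigr|=\sum_{x,y}|P-Q|.
\]
At the same time $\mathbb{H}(X)-\mathbb{H}(X\mid Y)$ is the mutual information $\di[P][Q]$. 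Thus the lemma becomes a two‑sided comparison of the divergence $\di[P][Q]$ with $\sum_{x,y}|P-Q|$, under the constraint $t\le 1/\beta$.

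The right‑hand inequality is then Pinsker's inequality for the pair $(P,Q)$: with $d_{TV}(P,Q)=\tfrac12\sum_{x,y}|P-Q|$ one has $d_{TV}(P,Q)^2\le\tfrac12 D_{\mathrm{nat}}(P\,\|\,Q)$, where $D_{\mathrm{nat}}$ is the natural‑logarithm divergence. Since $D_{\mathrm{nat}}(P\,\|\,Q)=\ln 2\cdot\di[P][Q]\le\di[P][Q]=\mathbb{H}(X)-\mathbb{H}(X\mid Y)$, rearranging gives $\sum_{x,y}|P-Q|\le\sqrt{2\,[\mathbb{H}(X)-\mathbb{H}(X\mid Y)]}$, which is the claimed upper bound.

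For the left‑hand inequality I would first absorb the base change (this is exactly what the factor $\log e=1/\ln 2$ accomplishes), reducing it to the analytic claim
\[
   D_{\mathrm{nat}}(P\,\|\,Q)=\sum_{x,y}Q\,t\ln t\ \le\ \tfrac{1}{2\sqrt{\beta}}\sum_{x,y}Q\,|t-1|.
\]
The decisive observation is that $P$ and $Q$ share total mass, so $\sum_{x,y}Q\,(t-1)=\sum_{x,y}(P-Q)=0$, and I am free to add any multiple of this vanishing linear functional. Accordingly I aim for the pointwise estimate
\[
   t\ln t\ \le\ \tfrac{1}{2\sqrt{\beta}}\,|t-1|+\tfrac{1}{2\sqrt{\beta}}\,(t-1)\qquad(0\le t\le 1/\beta),
\]
which for $t\le 1$ just says $t\ln t\le 0$, and for $1\le t\le 1/\beta$ says $t\ln t\le\tfrac{1}{\sqrt{\beta}}(t-1)$. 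Summing this against the weights $Q$ and discarding the linear term yields exactly the desired inequality.

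The only nontrivial point — and the crux of the lemma — is the pointwise bound $t\ln t\le\tfrac{1}{\sqrt{\beta}}(t-1)$ on $[1,1/\beta]$. Here $h(t):=\tfrac{1}{\sqrt{\beta}}(t-1)-t\ln t$ is concave (since $h''(t)=-1/t<0$), so on $[1,1/\beta]$ it attains its minimum at an endpoint; as $h(1)=0$, it suffices to check $h(1/\beta)\ge0$. Writing $u:=1/\sqrt{\beta}\ge1$ (indeed $\beta\le1$ always, because $\max_{x,y}P/Q\ge1$), this reduces to $k(u):=u-\tfrac1u-2\ln u\ge0$, which holds since $k(1)=0$ and $k'(u)=(1-1/u)^2\ge0$. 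The real work, then, is not any single estimate but selecting the right free linear term so that the reverse inequality collapses to this one‑variable convexity statement, together with the endpoint check on the whole range $[1,1/\beta]$ dictated by the likelihood‑ratio bound $\beta$.
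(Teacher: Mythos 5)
Your proof is correct, and its skeleton coincides with the paper's: both reduce the middle term to $\norm[p_{X,Y}-p_X\otimes p_Y]$ (the full $\ell^1$ distance between $P=p_{X,Y}$ and $Q=p_X\otimes p_Y$, using that each conditional difference has zero total mass), identify $\mathbb{H}(X)-\mathbb{H}(X|Y)$ with $\di[P][Q]$, and then invoke Pinsker's inequality for the upper bound and a reverse Pinsker inequality for the lower bound. The genuine difference is in the reverse Pinsker step: the paper cites it as a black box (Theorem~7 of Verd\'u's 2014 paper), whereas you prove the needed estimate $D_{\mathrm{nat}}(P\,\|\,Q)\le\tfrac{1}{2\sqrt{\beta}}\sum_{x,y}|P-Q|$ from scratch, by adding the vanishing linear functional $\tfrac{1}{2\sqrt{\beta}}\sum_{x,y}Q\,(t-1)=0$ and reducing to the pointwise bound $t\ln t\le\tfrac{1}{\sqrt{\beta}}(t-1)$ on $[1,1/\beta]$, verified via concavity of $h(t)=\tfrac{1}{\sqrt{\beta}}(t-1)-t\ln t$ and the endpoint check $u-\tfrac1u-2\ln u\ge0$ for $u=1/\sqrt{\beta}\ge1$. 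I checked the details: summing your pointwise inequality against $Q$ and converting to base~$2$ yields exactly $\norm[P-Q]\ge\tfrac{2\sqrt{\beta}}{\log e}\di[P][Q]$, and the fact $\beta\le1$ needed for the endpoint check does follow from $\sum_{x,y}Q\,(t-1)=0$. Your route buys a self-contained, elementary proof of the crux of the lemma at the cost of some calculus; the paper's buys brevity by outsourcing that step.
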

        Note that, as a direct result, we immediately obtain the following fact.
        \begin{coro}{Pinsker inequality for random sign}{pinsker inequality}
           Fix random variables $X, Y$ such that $X$ is a symmetric random sign. Then
           \begin{equation}\label{pinsker inequality}
              \frac{\sqrt{2}}{\log e}\left[1 - \mathbb{H}(X|Y)\right] \le	\E\norm[p_{X | Y}(\cdot | Y) - p_X] \le \sqrt{2\left[1 - \mathbb{H}(X|Y)\right]}.
           \end{equation}
        \end{coro}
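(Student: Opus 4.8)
The plan is to obtain the corollary by specializing \Cref{lemm:l:mich} to the case where $X$ takes the two values $\pm1$ with equal probability. The only inputs I need are the value of the marginal entropy $\mathbb{H}(X)$ and a lower bound on the constant $\beta$ appearing in the lemma.

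First I would record that, since $X$ is a symmetric random sign, $p_X(1)=p_X(-1)=1/2$, so (with the base-$2$ logarithm used throughout) $\mathbb{H}(X)=1$. Substituting this into both sides of~\eqref{it7} turns the difference $\mathbb{H}(X)-\mathbb{H}(X|Y)$ into $1-\mathbb{H}(X|Y)$; in particular the upper bound of the corollary is immediate from the upper bound of the lemma.

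For the lower bound I would estimate $\beta$. Writing $p_{X,Y}(x,y)=p_{X|Y}(x|y)\,p_Y(y)$ gives
$$\frac1\beta=\sup_{x,y}\frac{p_{X,Y}(x,y)}{p_X(x)p_Y(y)}=\sup_{x,y}\frac{p_{X|Y}(x|y)}{p_X(x)}=\sup_{x,y}2\,p_{X|Y}(x|y)\le 2,$$
where the last inequality uses $p_{X|Y}(x|y)\le1$ and $p_X(x)=1/2$. Hence $\beta\ge 1/2$, so $2\sqrt{\beta}\ge\sqrt2$. Since $\mathbb{H}(X|Y)\le\mathbb{H}(X)=1$, the bracket $1-\mathbb{H}(X|Y)$ is non-negative, and therefore
$$\frac{\sqrt2}{\log e}\bigl[1-\mathbb{H}(X|Y)\bigr]\le\frac{2\sqrt\beta}{\log e}\bigl[1-\mathbb{H}(X|Y)\bigr]\le\E\norm[p_{X|Y}(\cdot|Y)-p_X],$$
the last step being the lower bound of~\eqref{it7}. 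This yields the lower bound of the corollary.

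There is essentially no obstacle here: the whole content is the specialization $\mathbb{H}(X)=1$ together with the elementary observation $\beta\ge1/2$. The only point requiring a moment's care is that the improvement of the constant from $2\sqrt\beta$ to the clean value $\sqrt2$ relies on the non-negativity of $1-\mathbb{H}(X|Y)$, which is what lets me replace $2\sqrt\beta$ by its lower bound $\sqrt2$ without reversing the inequality.
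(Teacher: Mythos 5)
Your proof is correct and follows exactly the same route as the paper: specialize \Cref{lemm:l:mich} via $\mathbb{H}(X)=1$ and $1/\beta=2\sup_{x,y}p_{X|Y}(x|y)\le 2$, so that $2\sqrt{\beta}\ge\sqrt{2}$. Your explicit remark that replacing $2\sqrt{\beta}$ by $\sqrt{2}$ requires the non-negativity of $1-\mathbb{H}(X|Y)$ is a small point the paper leaves implicit, but the argument is the same.
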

        \begin{proof}
           If $X$ is a random symmetric sign (i.e.\ $X$ takes values $\pm1$ with equal probability) then $p_X \equiv 1/2$ and thus $\mathbb{H}(X) = \log 2 = 1$ and
           \begin{equation}
              1 / \beta = 2\sup_{x, y} p_{X|Y}(x|y) \le 2.
           \end{equation}
        \end{proof}

        The idea of the proof  \Cref{lemm:l:mich} is to use the Pinsker and reversed Pinsker inequalities.
        In order to do so, we need reformulate quantities appearing in \eqref{pinsker inequality}  in terms of KL-divergnece and get rid of the integral $\E$. To this end, recall that given two probability distributions  $p$ and $q$ on a finite space $\mc{X}$,
        \begin{equation}
           \di[p][q] = \sum_{x \in \mc{X}} p(x)\log\frac{p(x)}{q(x)}
        \end{equation}
        stands the KL-divergence between distributions $p_X$ and $p_Y$ with the convention that $0/0 = 0$. Now, one easily checks that
        \begin{equation}
           \di[p_{X, Y}][p_X\otimes p_Y] = \h[X] - \coh[X][Y]
        \end{equation}
        The last ingredient needed for the proof is the following one.
        \begin{prop}{Mean conditional variation norm equals to joint total variation}{mean v}
           For any random variables $X, Y$,
           \begin{equation}\label{it4}
              \E\norm[p_{X | Y}(\cdot | Y) - p_X] = \norm[p_{X,Y} - p_X\otimes p_Y]
           \end{equation}
        \end{prop}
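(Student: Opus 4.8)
The plan is to reduce both sides to an ordinary $\ell^1$-sum over the (finite) state spaces and then merely exchange the order of an absolute value and a nonnegative factor.

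First I would record the elementary fact that for a \emph{real} signed measure $\mu$ on a finite set, the norm in the statement coincides with the $\ell^1$-norm of its point masses: $\norm[\mu] = \sum_x |\mu(\{x\})|$. Indeed, writing $P = \{x : \mu(\{x\}) \ge 0\}$, the choice $A = P$ gives $|\mu(P)| + |\mu(P^c)| = \mu^+ + \mu^- = \sum_x |\mu(\{x\})|$, while for an arbitrary $A$ the triangle inequality yields $|\mu(A)| + |\mu(A^c)| \le \sum_{x} |\mu(\{x\})| = \mu^+ + \mu^-$; hence the supremum defining $\norm[\mu]$ is attained at $A = P$ and equals the $\ell^1$-norm. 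All measures occurring below are real, so this suffices.

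Applying this to the right-hand side gives at once
$$\norm[p_{X,Y} - p_X \otimes p_Y] = \sum_{x,y} \left|p_{X,Y}(x,y) - p_X(x)\, p_Y(y)\right|.$$
For the left-hand side, for each fixed value $y$ of $Y$ the object $p_{X|Y}(\cdot\,|\,y) - p_X$ is a real signed measure on $\mc{X}$, so the same reduction gives $\norm[p_{X|Y}(\cdot\,|\,y) - p_X] = \sum_x \left|p_{X|Y}(x\,|\,y) - p_X(x)\right|$. Since $\E_Y$ is integration against $p_Y$, I obtain
$$\E\,\norm[p_{X|Y}(\cdot\,|\,Y) - p_X] = \sum_y p_Y(y) \sum_x \left|p_{X|Y}(x\,|\,y) - p_X(x)\right|.$$

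The final step is to push the nonnegative factor $p_Y(y)$ inside the absolute value and use the defining relations $p_Y(y)\, p_{X|Y}(x\,|\,y) = p_{X,Y}(x,y)$ and $p_Y(y)\, p_X(x) = (p_X \otimes p_Y)(x,y)$, which turns the last display into $\sum_{x,y} \left|p_{X,Y}(x,y) - p_X(x) p_Y(y)\right|$, matching the right-hand side. There is no serious obstacle here; the only points requiring a word of care are the reduction of $\norm[\cdot]$ to the $\ell^1$-norm above (valid precisely because the signed measures in question are real) and the bookkeeping for those $y$ with $p_Y(y) = 0$, where $p_{X|Y}(\cdot\,|\,y)$ is undefined but contributes weight zero to both the expectation and the joint sum, so that the two sides still agree term by term.
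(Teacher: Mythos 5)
Your proof is correct and follows essentially the same route as the paper's: both sides are reduced to $\ell^1$-sums over the finite state spaces and the factor $p_Y(y)$ is absorbed into the absolute value. The only difference is that you spell out the identification of the total variation norm with the $\ell^1$-norm and the treatment of atoms with $p_Y(y)=0$, which the paper leaves implicit.
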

        \begin{proof}
           By the very definitions,
           \begin{align*}
              \E\norm[p_{X | Y}(\cdot | Y) - p_X] & = \E_{p_Y} \norm[p_{X | Y}(\cdot | Y) - p_X]= \sum_{y \in \mathcal{Y}} p_Y(y) \sum_{x \in \mathcal{X}} |p_{X|Y}(x|y) - p_{X}(x)| \\
                                             & =\sum_{x, y}|p_{X,Y}(x,y) -p_X(x)p_Y(y)| = \norm[p_{X,Y} - p_X\otimes p_Y].
           \end{align*}
        \end{proof}
        \begin{proof}[Proof of \Cref{lemm:l:mich}]
           The upper bound is obtained using the Pinsker inequality. Indeed, by \Cref{prop:mean v},
           \beq\label{it5}
           \begin{array}{c}\E\norm[p_{X | Y}(\cdot | Y) - p_X]  = \|p_{X,Y} - p_X\otimes p_Y\|_{TV} \le  \sqrt{2\mathbbm{D}(p_{X,Y}||p_X\otimes p_Y)}=
              \sqrt{2\left[\mathbb{H}(X) - \mathbb{H}(X|Y)\right]}.\end{array}
           \eeq
           On the other hand, using the reverse Pinsker inequality, see e.g.\ \cite{verdu} Theorem 7, we obtain
           \begin{equation*}
              \E\norm[p_{X | Y}(\cdot | Y) - p_X] \ge  \frac{2\sqrt{\beta}}{\log e} \mathbbm{D}(p_{X,Y}||p_X\otimes p_Y) = \frac{2\sqrt{\beta}}{\log e} \left[\mathbb{H}(X) - \mathbb{H}(X|Y)\right],
           \end{equation*}
           where $1/\beta := \sup_{x, y}\frac{p_{X, Y}(x, y)}{p_X(x)p_Y(y)}$.
        \end{proof}

        Let us now make a remark on the monotonicity of the three terms which appear in \eqref{pinsker inequality}. It is well known that given two probability distributions on the same state space $\mc{X}$,
        \begin{equation}\label{diagonal realization}
           \norm[p - q] = \inf_{X \sim p,\; Y \sim q}\pof[X \neq Y]
        \end{equation}
        Thus, intuitively, $\norm[p - q]$ tells us what is the best (that is, closest to diagonal) coupling of $p$ and $q$.
        This fact immediately yields monotonicity property of the total variation norm.
        \begin{prop}{ Monotonicity property of the total variation norm}{}
           Let $X, Y, Z \in \mc{X}$ be some random variables. Then
           \begin{equation}
              \norm[p_{X,Y,Z}-p_X\ot p_{Y,Z}] \ge \norm[p_{X,Y}-p_X\ot p_{Y}].
           \end{equation}
        \end{prop}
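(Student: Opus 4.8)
The plan is to deduce the inequality directly from the coupling characterization \eqref{diagonal realization}, which expresses each total variation norm as an infimum over couplings of the probability that the two coupled random variables disagree. The key observation is that the right-hand quantity lives on the larger space $\mc{X}^3$ (the law of the triple $(X,Y,Z)$ versus $p_X\ot p_{Y,Z}$), whereas the left-hand quantity lives on $\mc{X}^2$ (the law of the pair $(X,Y)$ versus $p_X\ot p_Y$), and the latter pair of measures is obtained from the former by marginalizing out the last coordinate. Since $\mc{X}$ is finite the relevant infima are attained, so it suffices to exhibit a suitable coupling on the smaller space out of an optimal one on the larger space.

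Concretely, I would first fix a coupling attaining $\norm[p_{X,Y,Z}-p_X\ot p_{Y,Z}]$: random vectors $(X_1,Y_1,Z_1)\sim p_{X,Y,Z}$ and $(X_2,Y_2,Z_2)\sim p_X\ot p_{Y,Z}$, defined on one probability space, with $\pof[(X_1,Y_1,Z_1)\neq(X_2,Y_2,Z_2)]$ equal to that norm. I would then pass to the marginal coordinates $(X_1,Y_1)$ and $(X_2,Y_2)$. By construction $(X_1,Y_1)\sim p_{X,Y}$, and since under $p_X\ot p_{Y,Z}$ the variable $X$ is independent of $(Y,Z)$, dropping $Z$ preserves this independence, so $(X_2,Y_2)\sim p_X\ot p_Y$. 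Hence $\big((X_1,Y_1),(X_2,Y_2)\big)$ is an admissible coupling of $p_{X,Y}$ and $p_X\ot p_Y$.

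The decisive step is the elementary implication that disagreement of the pairs forces disagreement of the triples, i.e.\ $(X_1,Y_1)\neq(X_2,Y_2)$ entails $(X_1,Y_1,Z_1)\neq(X_2,Y_2,Z_2)$. Therefore
$$\pof[(X_1,Y_1)\neq(X_2,Y_2)]\le\pof[(X_1,Y_1,Z_1)\neq(X_2,Y_2,Z_2)].$$
Bounding the left-hand infimum from \eqref{diagonal realization} by the value of our particular coupling, and using that this coupling attains the norm on the right, yields $\norm[p_{X,Y}-p_X\ot p_Y]\le\norm[p_{X,Y,Z}-p_X\ot p_{Y,Z}]$, as claimed.

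I do not anticipate a genuine obstacle; the only point requiring a moment's care is the verification that the pushforward of $p_X\ot p_{Y,Z}$ under the projection $(x,y,z)\mapsto(x,y)$ is again the product $p_X\ot p_Y$, i.e.\ that marginalizing out $Z$ does not destroy the independence of $X$ from $Y$. An equivalent and perhaps cleaner route would be to invoke the data-processing property of total variation under any deterministic map $F$, namely $\norm[F_*\mu]\le\norm[\mu]$, applied to the marginalization $F(x,y,z)=(x,y)$; the coupling argument above is simply the explicit instance of that property.
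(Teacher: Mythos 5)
Your proof is correct and follows essentially the same route as the paper: both take an optimal coupling realizing $\norm[p_{X,Y,Z}-p_X\ot p_{Y,Z}]$ via \eqref{diagonal realization}, project out the $Z$-coordinate, and use that disagreement of the pairs implies disagreement of the triples. If anything, your write-up is slightly more careful than the paper's, since you explicitly verify that the marginal of $p_X\ot p_{Y,Z}$ under dropping $Z$ is $p_X\ot p_Y$, a point the paper leaves implicit.
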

        \begin{proof}
           Take the realization of \eqref{diagonal realization} with $p = p_{X,Y,Z}$ and $q = p_X\ot p_{Y,Z}$ to obtain random variables $(X, Y, Z) \sim p_{X, Y, Z}$ and $(X', Y', Z') \sim p_X \ot p_{Y, Z}$ such that $\pof[X \neq X', Y \neq Y', Z \neq Z'] = \norm[p_{X,Y,Z}-p_X\ot p_{Y,Z}]$. Clearly,
           \begin{equation}
              \pof[X \neq X', Y \neq Y', Z \neq Z'] \le \pof[X \neq X', Y \neq Y']
           \end{equation}
           and thus, once more using \eqref{diagonal realization},
           \begin{equation*}
              \norm[p_{X,Y}-p_X\ot p_{Y}] \le \pof[X \neq X', Y \neq Y'] \le \pof[X \neq X', Y \neq Y', Z \neq Z'] = \norm[p_{X,Y,Z}-p_X\ot p_{Y,Z}].
           \end{equation*}
        \end{proof}
        \begin{rema}{Monotonicity of terms in \eqref{pinsker inequality}}{}
           Since the Shannon entropy is monotonic, we have
           \begin{equation}
              \h[X] - \coh[X][Y] \le \h[X] - \coh[X][Y, Z]
           \end{equation}
           Combining this observation with the previous remark, we see that both terms in \eqref{pinsker inequality} $\E\norm[p_{X | Y}(\cdot | Y) - p_X]$ and  $1 - \mathbb{H}(X|Y)$ posses monotonicity property: if $Y$ is a vector random variable in \eqref{pinsker inequality} then dropping of any of its coordinates forces all terms in \eqref{pinsker inequality} to decrease.
        \end{rema}

        Last but not least let us present the well known fact which says that in case of a random variable attaining only two values and being $L^2$ perpendicular to some $\sigma$-algebra is equivalent to being independent of that algebra. Note that this observation can be used for Sarnak processes $\X$ taking at most two values, namely, in such a case $X_0 \amalg \Pi(\X)$.
        \begin{prop}{}{uncor}
           Let $X$ be a two valued random variable and $\mathcal{G}$ be some sub-$\sigma$-algebra such that $\cmv[X][\mathcal{G}] = \E X$. Then $X \amalg \mathcal{G}$.
        \end{prop}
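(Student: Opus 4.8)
The plan is to reduce the independence $X \amalg \mathcal{G}$ to the observation that, for a two-valued random variable, the conditional expectation $\cmv[X][\mathcal{G}]$ already pins down the conditional probability of each value; the whole trick is that two-valuedness lets one linearize an indicator. First I would name the two values, writing $X \in \{a,b\}$ with $a \neq b$, and set $A := \{X = a\}$, so that $A^c = \{X = b\}$. From $X = a\ind{A} + b\ind{A^c} = b + (a-b)\ind{A}$ one solves
\[\ind{A} = \frac{X - b}{a - b},\]
exhibiting the indicator of $A$ as a deterministic affine function of $X$.

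Next, applying $\cmv[\cdot][\mathcal{G}]$ and using its linearity together with the hypothesis $\cmv[X][\mathcal{G}] = \E X$, I would get
\[\cmv[\ind{A}][\mathcal{G}] = \frac{\cmv[X][\mathcal{G}] - b}{a-b} = \frac{\E X - b}{a-b} = \E\ind{A} = \pof[A]\]
almost surely; in other words $\cpof[A][\mathcal{G}] = \pof[A]$ a.s., i.e.\ the conditional probability of $A$ is a.s.\ equal to its unconditional value.

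Finally I would convert this into independence. For every $G \in \mathcal{G}$, the defining property of conditional expectation yields
\[\pof[A \cap G] = \E\big(\ind{G}\,\cmv[\ind{A}][\mathcal{G}]\big) = \pof[A]\,\pof[G],\]
and the identical factorization for $A^c$ follows by complementation (the cases $\emptyset$ and $\Omega$ being trivial). Since $\sigma(X) = \{\emptyset, A, A^c, \Omega\}$ consists of exactly these four sets, the factorization $\pof[A' \cap G] = \pof[A']\,\pof[G]$ holds for all $A' \in \sigma(X)$ and all $G \in \mathcal{G}$, which is precisely $X \amalg \mathcal{G}$.

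I do not anticipate a genuine obstacle: the argument is routine once two-valuedness is exploited to write $\ind{A}$ as an affine image of $X$. The only mild points of care are excluding the degenerate case $a = b$ (in which $X$ is a.s.\ constant and the statement is trivial) and noting that, because $\sigma(X)$ is finite, checking the factorization on its four elements already suffices, so no monotone-class or $\pi$-system passage is needed.
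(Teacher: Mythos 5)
Your proof is correct and rests on exactly the same mechanism as the paper's: two-valuedness lets one write the indicator of $\{X=a\}$ as an affine function of $X$, so the hypothesis forces the conditional law of $X$ given $\mathcal{G}$ to equal its unconditional law. The paper merely packages this as ``$X$ is uncorrelated with each $\iof[G]$, and uncorrelated two-valued random variables are independent'' (proving that auxiliary fact by the very same affine normalization), whereas you work directly with the conditional expectation; the difference is purely presentational.
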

        \begin{proof}
           Clearly, our assumption is equivalent to: $\E X \iof[G] = \E X \E \iof[G]$ for any $G \in \mc{G}$. In other words, $Cov(X, \iof[G]) = 0$. Now, it is enough to recall standard fact that if two random variables taking only two values are uncorrelated then they are independent. Applying this fact, we get that for any $G \in \mc{G}$, $X \amalg \iof[G]$. Hence $X \amalg \mc{G}$.
        \end{proof}
        \begin{rema}{When uncorrelated random variables are independent}{}
           In the above proof we used the fact that if $X\in \mc{X}$ and $Y \in \mc{Y}$ are uncorrelated and $|\mc{X}| = |\mc{Y}| = 2$ then they are independent. Here we provide a sketch of a proof. Firstly, without loss of generality we can assume that $\mc{X} = \mc{Y} = \{0, 1\}$. To see this consider
           \begin{equation}
              X' = \frac{X - x_0}{x_1 - x_0}, \quad Y' = \frac{Y - y_0}{y_1 - y_0},
           \end{equation}
           where $\mc{X} = \{x_0, x_1\}$ and $\mc{Y} = \{y_0, y_1\}$. For binary random variables, the assumption is equivalent to
           \begin{equation}
              \pof[X = 1, Y = 1] = \pof[X = 1]\pof[Y = 1].
           \end{equation}
           It remains to use the fact that if events $A$ and $B$ are independent then so are $A^c$ and $B$.
        \end{rema}
    \subsection{Proof of \Cref{theo:p:alaweakB}}
        Firstly, since $X_0$ is perpendicular to $\Pi(\X)$ and $X_0$ takes only two values, by \Cref{prop:uncor}, we in fact have $X_0 \amalg \Pi(\X)$. In particular, Sarnak property is equivalent to the $\coh[X_0][X_{\le -g}] \xrightarrow{g\to\infty} \h[X_0] = 1$. By the monotonicity property of conditional entropy this is equivalent to the statement that for every $\vep >0$ there is a gap $g \ge 1$ such that for all $m \in \N$
        \begin{equation}\label{equiv}
           1 - \coh[X_0][X_{-g - m}^{-g}] \le \vep.
        \end{equation}
        Indeed, if \eqref{equiv} holds then taking $m\to\infty$ yields the result. Conversely, for every $\vep$ there is $g \in \N$ such that $1 - \coh[X_0][X_{\le -g}] \le \vep$. However, for any $m \ge 0$, $1 - \coh[X_0][X_{\le -g}] \ge 1 - \coh[X_0][X_{-g - m}^{-g}]$.

        Now, an application of \Cref{coro:pinsker inequality} with $X = X_0$ and $Y = X_{-m -g}^{-g}$ yields,
        \begin{equation}\label{it8}
           \frac{\sqrt{2}}{\log e}\left[1-\mathbb{H}(X_0|X_{-m - g}^{-g}) \right] \le\\			\E|p_{X_0 |X_{-m - g}^{-g} }(\cdot | X_{-m - g}^{-g}) - p_{X_0}| \le \sqrt{2\left[1-\mathbb{H}(X_0|X_{-m - g}^{-g}) \right]}
        \end{equation}
        and the result follows.
        \begin{rema}{Rate of convergence}{}
           Note that, thanks to the \eqref{it8}, if we know the rate of convergence for  $1-\mathbb{H}(X_0|X_{-m - g}^{-g}) $ then \eqref{it8} controls the rate for the term $	\E|p_{X_0 |X_{-m - g}^{-g} }(\cdot | X_{-m - g}^{-g}) - p_{X_0}|$.
        \end{rema}
        \begin{proof}[Proof of \Cref{coro:t:main}]
           Let $\mbox{X} \sim \kappa$,
           Since  $1/ n_k \sum_{0 \le n\leq n_k - 1}\delta_{S^n\bfu}\overset{k \to \infty}{\Rightarrow}\kappa$,
           \begin{equation}\label{g1}
              \lim_{k\to\infty}\frac1{n_k}|A(q,n_k)| = \lim_{k\to\infty}\frac1{n_k}\sum_{i = 0}^{n_k - 1} \iof[S^i{\boldsymbol{u}}\text{ starts with } q] = \kappa(q).
           \end{equation}
           Similarly,
           \begin{equation}\label{g2}
              \frac{1}{n_k}\left|\{n + g\in A(q,n_k) \mid u_n =1\}\right| = \frac1{n_k}\sum_{i = 0}^{n_k - 1} \iof[S^i\boldsymbol{u}(0) = 1]\iof[S^{i + g}\boldsymbol{u}\text{ starts with } q] \to \pof[X_0 = 1, X_g^{g + m} = q].
           \end{equation}
           Combining \eqref{g1} and \eqref{g2},
           $$\vep_{g}(q)= \left|\frac{1}{2}\pof[X_g^{g + m} = q] - \pof[X_0 = 1, X_g^{g + m} = q]\right|$$
           and the result follows from \Cref{theo:p:alaweakB}.
        \end{proof}
        \begin{rema}{}{}
           In the above proof we have used the symmetric version of \Cref{theo:p:alaweakB} with the past replaced by the future, which is valid as well because in the finite case the past tail $\sigma$-field of process equals to the future tail $\sigma$-field one. In particular, the  Sarnak property can be stated equivalently with reversed roles of the past and the future.
        \end{rema}

    \subsection{Background for \Cref{theo:t:mnozenie}}
        \begin{rema}{Relatively independent coupling above a $\sigma$-field}{}
           Later on we use the following construction. Given a probability space $\zdk$ and a sub-$\sigma$-algebra $\mathcal{E}\subset\mathcal{D}$, the formula
           $$
              \lambda(D_1\times D_2):=\int \EE(D_1|\mathcal{E})\EE(D_2|\mathcal{E})\,d\kappa$$
           determines a coupling on the space $(Z_1\times Z_2,\mathcal{D}_1\otimes\mathcal{D}_2)$  ($(Z_j,\mathcal{D}_j)=(Z,\mathcal{D})$ for $j=1,2$). A characteristic property of this coupling is that if we have two measurable functions $f,g:Z\to\R$ %such that
           then the following conditions are equivalent:
           \begin{description}
            \item[(i)] $f(z_1)=g(z_2)$ for $\la$-a.e.\ $(z_1,z_2)\in Z_1\times Z_2$,
            \item[(ii)]$f\overset{\kappa}{=}g$, and $f$ is $\mathcal{E}$-measurable.
           \end{description}
           In the following theorem  a similar type of joining is used,  for more information we refer the reader to \cite{Gl}, Examples~6.3 and Theorem~6.8.
        \end{rema}
        To prove \Cref{theo:t:mnozenie}, let us recall the classical theorem (see e.g.\ \cite{Ru}, the result is a consequence of the basic lemma on non-disjointness proved in \cite{Gl-Th-We} and \cite{Le-Pa-Th})  about joinings with deterministic systems.

        \begin{Th}\label{t:character}Let  $(Y,\mathcal{C},\nu,S)$ be a dynamical system and $(Z,\mathcal{D},\kappa,R)$ has entropy zero. Assume that $\rho\in J(S,R)$ is a joining of $\nu$ and $\kappa$. Then
           \beq\label{char1}
           \int_{Y\times Z}f(y)g(z)\,d\rho(y,z)=\int_{(Y/\Pi(S))\times Z}\EE(f|\Pi(S))(x)g(z)\,d\rho|_{(Y/\Pi(S))\times Z}(x, z),\eeq
           i.e.\ each such joining has to be the relatively independent extension of its restriction to $(Y/\Pi(S))\times Z$.\end{Th}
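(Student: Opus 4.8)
The plan is to recast \eqref{char1}, viewed as an identity valid for all $f\in L^2(\nu)$ and all $g\in L^2(\kappa)$, as a single measurability statement and then to bring in the zero entropy of $R$ through the Pinsker characterization \eqref{defPi}. Writing $f_0:=f-\EE(f|\Pi(S))$, the asserted identity is equivalent to $\int_{Y\times Z}f_0(y)g(z)\,d\rho=0$. Since $f_0$ depends on $y$ alone, it is measurable with respect to the first-coordinate $\sigma$-algebra $\widehat{\mathcal C}:=\mathcal C\otimes\{\emptyset,Z\}$, so $\int f_0\,g\,d\rho=\int f_0\,\EE_\rho(g\mid\widehat{\mathcal C})\,d\rho$; and $f_0$ satisfies $\EE(f_0\mid\Pi(S))=0$. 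Hence, pulling the $\Pi(S)$-measurable factor out of the conditional expectation, everything reduces to the following claim: for every $g\in L^2(\kappa)$ the function $Pg:=\EE_\rho(g\mid\widehat{\mathcal C})$, regarded as an element of $L^2(\nu)$, is measurable with respect to $\Pi(S)$.

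First I would record the intertwining coming from invariance of $\rho$ under $S\times R$. Because $\widehat{\mathcal C}$ is $(S\times R)$-invariant, one has $\EE_\rho(\varphi\mid\widehat{\mathcal C})\circ(S\times R)=\EE_\rho\big(\varphi\circ(S\times R)\mid\widehat{\mathcal C}\big)$ for every $\varphi$; taking $\varphi=g$ a function of $z$ alone yields $(Pg)\circ S=P(g\circ R)$. Thus $P\colon L^2(\kappa)\to L^2(\nu)$ is a Markov operator intertwining the Koopman operators of $R$ and $S$, and the family $\{Pg:g\in L^\infty(\kappa)\}$ is stable under $h\mapsto h\circ S$. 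Consequently $\mathcal G:=\sigma\big(Pg:g\in L^\infty(\kappa)\big)$ is an $S$-invariant sub-$\sigma$-algebra of $\mathcal C$, i.e.\ it defines a factor $(Y,\mathcal G,\nu,S)$ of $S$.

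The crux is to show that this factor has zero entropy. By the previous paragraph it is a Markov quasi-factor of the zero entropy system $(Z,\mathcal D,\kappa,R)$: its generators $Pg$ are Markov images of functions on $Z$ and its dynamics is carried by $R$ through $P$. The basic lemma on non-disjointness of \cite{Gl-Th-We}, \cite{Le-Pa-Th} — equivalently, the fact that entropy cannot be created by passing to a Markov quasi-factor of a zero entropy system — then forces $\h[\nu,S,\ds[A]_{\boldsymbol{B}}]=0$ for every $\boldsymbol{B}\in\mathcal G$. By the Pinsker characterization \eqref{defPi} this gives $\mathcal G\subset\Pi(S)$, so each $Pg$ is $\Pi(S)$-measurable, which is exactly the claim; combined with the reduction in the first paragraph this proves \eqref{char1}.

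The main obstacle is precisely the entropy bound in the third paragraph: verifying that the factor generated by the image of a Markov intertwining out of a zero entropy system is again of zero entropy. This is the one genuinely non-elementary ingredient, and it is the content of the cited non-disjointness lemma, which is the relative (Markov) form of the classical disjointness between zero entropy systems and Kolmogorov systems; by contrast the reduction to a measurability statement and the intertwining identity $(Pg)\circ S=P(g\circ R)$ are routine Hilbert-space and invariance computations.
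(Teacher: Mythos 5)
The paper offers no proof of Theorem~\ref{t:character}: it is quoted as classical, with all the real content delegated to the basic lemma on non-disjointness of \cite{Gl-Th-We}, \cite{Le-Pa-Th} (see also \cite{Ru}). Your argument is correct, and ultimately it leans on that same black box, so the difference is one of packaging. Your reduction is clean: replacing $f$ by $f_0=f-\EE(f|\Pi(S))$, conditioning $g$ on the first-coordinate algebra $\widehat{\mathcal C}$, and observing that \eqref{char1} follows once $Pg=\EE_\rho(g\mid\widehat{\mathcal C})$ is $\Pi(S)$-measurable; the intertwining $(Pg)\circ S=P(g\circ R)$ and the $S$-invariance of $\mathcal G=\sigma\bigl(Pg: g\in L^\infty(\kappa)\bigr)$ are verified correctly, and $\bigl(Y,\mathcal G,\nu,S\bigr)$ is indeed a Markov quasi-factor of $R$. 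The standard route from the cited lemma is shorter, however: the lemma in its usual form states that in any extension the Pinsker algebra of the big system is relatively independent of a given factor algebra over the Pinsker algebra of that factor; applying this to the joining system $(Y\times Z,\rho,S\times R)$ with factor algebra $\widehat{\mathcal C}$, and noting that zero entropy of $R$ places $\mathcal D$ inside $\Pi(\rho)$, yields at once that $\widehat{\mathcal C}$ and $\mathcal D$ are relatively independent over $\Pi(S)$, which is \eqref{char1}. Your detour through the statement that a Markov quasi-factor of a zero-entropy system has zero entropy is legitimate --- that statement is true and is available in the cited references --- but be aware that it is essentially equivalent to Theorem~\ref{t:character} itself and is proved by the same relative-independence lemma; so the construction of $\mathcal G$ buys you nothing beyond a different citation target, and the argument avoids circularity only because you are permitted to quote the quasi-factor theorem rather than prove it.
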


        \noindent

    \subsection{Proof of \Cref{theo:t:mnozenie}}
%             {\bf Proof of~\Cref{theo:t:mnozenie}} By assumption, we consider dynamical systems $(\mathcal{X}^{\Z},\nu,S)$ and $(\mathcal{Y}^{\Z},\kappa,S)$ given by $\X$ and $\Y$, respectively.
%         For this aim, fix any $F\in L^2(\Pi(\rho))$.
%         Note that $\rho$ can now be seen as a joining of $(\mathcal{X}^{\Z},\nu,S)$ with $((\mathcal{X}^{\Z}\times \mathcal{Y}^{\Z})/\Pi(\rho), \rho|_{\Pi(\rho)},S\times S)$ (we have $\Pi(\nu)\otimes \{\emptyset, \mathcal{Y}^{\Z}\}\subset \Pi(\rho)$). Therefore, by~\eqref{char1},
%         $$
%            \int X_0(\x)F(\x,\y)\,
%            d\rho(\x,\y)=
%            \int \EE(X_0|\Pi(\nu))(\x)F(\x,\y)
%            \,d\rho(\x,\y)=0.$$
%         Since $Y_0\in L^2(\Pi(\rho))$, we also have
%         $$\int X_0(\x)\Big(Y_0(\y)F(\x,\y)\Big)\,
%            d\rho(\x,\y)=0$$
%         which shows that $X_0\ot Y_0\perp L^2(\Pi(\rho))$.

 %{\bf Proof of~\Cref{theo:t:mnozenie}} 
 By assumption, the dynamical system $\bigl(\mathcal{X}^{\Z}\times\mathcal{Y}^{\Z},\rho,S\bigr)$ associated to the stationary process $(\X,\Y)$ is a joining of the dynamical systems $(\mathcal{X}^{\Z},\nu,S)$ and $(\mathcal{Y}^{\Z},\kappa,S)$ given by $\X$ and $\Y$, respectively. Let us fix $F\in L^2(\Pi(\rho))$.
        Note that $\rho$ induces a joining of $(\mathcal{X}^{\Z},\nu,S)$ with the zero-entropy system $((\mathcal{X}^{\Z}\times \mathcal{Y}^{\Z})/\Pi(\rho), \rho|_{\Pi(\rho)},S\times S)$. %(we have $\Pi(\nu)\otimes \{\emptyset, \mathcal{Y}^{\Z}\}\subset \Pi(\rho)$). 
        Therefore, by~\eqref{char1}, and using the assumption that $\X$ is Sarnak, we get
        $$
           \int X_0(\x)F(\x,\y)\,
           d\rho(\x,\y)=
           \int \EE\bigl(X_0|\Pi(\nu)\bigr)(\x)F(\x,\y)
           \,d\rho(\x,\y)=0.$$
        Since $Y_0\in L^2(\Pi(\rho))$, we also have
        $$\int X_0(\x)\Big(Y_0(\y)F(\x,\y)\Big)\,
           d\rho(\x,\y)=0, $$
        which shows that $X_0\ot Y_0\perp L^2(\Pi(\rho))$.

    \subsection{Background for \Cref{theo:p:thierry}}

        In  order to prove ~\Cref{theo:p:thierry}, we will need the result from \cite{Fu-Pe-We} and \cite{Ku-Le(jr)} which was already employed in \Cref{exam:ex:non_erg}, and whose proof  (we provide it for the sake of completeness) in case of $\{0,1\}$-valued processes  is a short compilation of the arguments from the aforementioned papers.

        \begin{lemm}{Retrieval of a deterministic process from its product with a K-process}{filtering}
           Let $(\X, \Y)$ be a stationary process such that both $\X$ and $\Y$ are binary, $\X\neq\boldsymbol{0}$ is Kolmogorov and  $\Y$ is deterministic (note that this implies $\X \amalg \Y$). Then $\Y$ is measurable with respect to $\sigma(\X\Y)$.
        \end{lemm}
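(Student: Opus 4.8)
The plan is to show that $\boldsymbol{Y}$ is $\sigma(\boldsymbol{X}\boldsymbol{Y})$-measurable by exploiting the interplay between three facts: that $\boldsymbol{X}$ is Kolmogorov (hence has trivial remote past), that $\boldsymbol{Y}$ is deterministic (hence $\Pi(\boldsymbol{Y})$ is the full $\sigma$-algebra of $\boldsymbol{Y}$), and the independence $\boldsymbol{X} \amalg \boldsymbol{Y}$ which is forced by the zero/positive entropy dichotomy. Since everything is $\{0,1\}$-valued, I would first record the arithmetic relation: at each coordinate $n$, the value $(XY)_n = X_n Y_n$ equals $Y_n$ exactly on the event $\{X_n = 1\}$ and equals $0$ on $\{X_n = 0\}$. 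The product process $\boldsymbol{X}\boldsymbol{Y}$ thus ``masks'' $\boldsymbol{Y}$ wherever $\boldsymbol{X}$ vanishes, and the task is to reconstruct the masked entries from the global structure.

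**Main steps.** First I would set $\boldsymbol{W} := \boldsymbol{X}\boldsymbol{Y}$ and observe that $\Pi(\boldsymbol{W})$ contains $\sigma(\boldsymbol{Y})$ up to the reconstruction one is trying to prove; the cleaner route is to compute the Pinsker factor of the system generated by $(\boldsymbol{X}, \boldsymbol{Y})$. Because $\boldsymbol{X}$ is Kolmogorov and $\boldsymbol{Y}$ deterministic with $\boldsymbol{X}\amalg\boldsymbol{Y}$, the dynamical system given by $(\boldsymbol{X},\boldsymbol{Y})$ is the direct product of a $K$-system and a zero-entropy system; hence its Pinsker factor is exactly $\sigma(\boldsymbol{Y})$. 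The crux is then to argue that $\sigma(\boldsymbol{W}) = \sigma(\boldsymbol{X}\boldsymbol{Y})$ already contains this Pinsker factor $\sigma(\boldsymbol{Y})$. To do this I would follow the argument of \cite{Fu-Pe-We}, \cite{Ku-Le(jr)}: one shows that the indicator process $\ind{W_n \ne 0} = \ind{X_n = 1}\ind{Y_n = 1}$, together with the ambient zero-entropy structure, lets one recover $Y_n$ on a set of full density, and then a measurability/limiting argument upgrades this to full reconstruction. Concretely, conditioning on the deterministic factor and using that $\boldsymbol{X}$ is an independent $K$-process, the conditional law of $\boldsymbol{W}$ given $\boldsymbol{Y}$ is Bernoulli with success parameter depending on $Y_n$; the zero-entropy ``coordinates'' of $\boldsymbol{Y}$ can therefore be read off from the empirical frequencies of $\boldsymbol{W}$ along orbits, which are $\sigma(\boldsymbol{W})$-measurable by the ergodic theorem applied inside the product.

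**The main obstacle.** The delicate point is passing from ``$\boldsymbol{Y}$ is recoverable in a statistical/density sense'' to the exact statement ``$\boldsymbol{Y}$ is $\sigma(\boldsymbol{W})$-measurable.'' The masking by $\{X_n = 0\}$ destroys information at a positive density of coordinates, so no finite window of $\boldsymbol{W}$ determines a given $Y_n$; one must use the determinism of $\boldsymbol{Y}$ globally. I expect the cleanest way is to invoke the relatively independent coupling description recalled just before Theorem~\ref{t:character}: since $\boldsymbol{X}\amalg\boldsymbol{Y}$ and $\boldsymbol{Y}$ is deterministic, any two realizations of $\boldsymbol{W}$ sharing the same $\boldsymbol{X}$-fibre must share the same $\boldsymbol{Y}$, and the $K$-property of $\boldsymbol{X}$ guarantees that $\boldsymbol{X}$ itself is reconstructible from the remote-past-trivial structure of $\boldsymbol{W}$ on its support. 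Thus the real work is an entropy bookkeeping showing $\h[\boldsymbol{W}] = \h[\boldsymbol{X}]$, so that $\boldsymbol{Y}$ contributes nothing to the entropy of $\boldsymbol{W}$ and must live in the Pinsker factor $\Pi(\boldsymbol{W})$, which is $\sigma(\boldsymbol{W})$-measurable by definition. I would make this precise by verifying $\coh[W_0][W_{\le -1}] = \coh[X_0][X_{\le -1}]$ using $\boldsymbol{X}\amalg\boldsymbol{Y}$ and the fact that $Y_0$ is $\Pi(\boldsymbol{Y})$-measurable, and then concluding $\sigma(\boldsymbol{Y}) \subseteq \Pi(\boldsymbol{W}) \subseteq \sigma(\boldsymbol{W})$.
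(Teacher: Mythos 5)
Your proposal correctly identifies the ambient structure (the system generated by $(\X,\Y)$ is the product of a K-system and a zero-entropy system, with Pinsker factor $\sigma(\Y)$), but both concrete mechanisms you offer for actually recovering $\Y$ from $\boldsymbol{W}:=\X\Y$ fail, and that recovery is the entire content of the lemma. The statistical route fails twice over: $\X$ is only assumed Kolmogorov, not i.i.d., so the conditional law of $\boldsymbol{W}$ given $\Y$ need not be Bernoulli; and Birkhoff averages of $\sigma(\boldsymbol{W})$-measurable functions converge to \emph{invariant} functions, which can at best identify ergodic-component data and can never single out the value $Y_n$ at one fixed coordinate. The entropy route also fails: even granting $\h[\boldsymbol{W}]=\h[\X]=\h[(\X,\Y)]$, so that $\sigma(\X,\Y)$ is a zero-relative-entropy extension of $\sigma(\boldsymbol{W})$, this does not force $\sigma(\Y)=\Pi(\X,\Y)\subset\sigma(\boldsymbol{W})$. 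Indeed, in the product of a Bernoulli shift $\boldsymbol{B}$ with an irrational rotation $\boldsymbol{R}$, the factor $\sigma(\boldsymbol{B})$ has zero relative entropy in the whole system yet does not contain the Pinsker algebra $\sigma(\boldsymbol{R})$; ``contributes nothing to the entropy'' does not imply ``lies in the Pinsker factor of $\boldsymbol{W}$''. One must exploit the specific multiplicative masking structure, and no amount of entropy bookkeeping substitutes for that.

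The idea you only gesture at in passing (two realizations of $\boldsymbol{W}$ forcing the same $\Y$) is in fact the whole proof in the paper. Take two copies $(\X',\Y')$, $(\X'',\Y'')$ of $(\X,\Y)$ relatively independent over $\sigma(\X\Y)$, so that $X'_nY'_n=X''_nY''_n$ a.s. Multiplying by $\iof[Y'_n=0]\iof[Y''_n=1]$ gives $0=X''_n\,\iof[Y'_n=0]\iof[Y''_n=1]$. The pair $(\Y',\Y'')$ generates a deterministic process while $\X''$ is Kolmogorov, hence the two are disjoint and in particular independent under this joining, so $0=\pof[Y'_n=0,\ Y''_n=1]\,\E X''_n$; since $\X\neq\boldsymbol{0}$ and $\X$ is $\{0,1\}$-valued, $\E X''_n>0$ and therefore $\pof[Y'_n=0,\ Y''_n=1]=0$. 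By symmetry $Y'_n=Y''_n$ a.s., and the characteristic property of the relatively independent coupling over a $\sigma$-algebra then yields that $\Y$ is $\sigma(\X\Y)$-measurable. You should replace the statistical and entropy arguments by this coupling step, which is where the disjointness of K-systems from zero-entropy systems is actually used.
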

        \begin{proof}%[Proof of Lemma~\ref{lemma:filtering}]
           Consider two copies $(\X', \Y')$ and $(\X'', \Y'')$ of $(\X, \Y)$, which are relatively independent over $\X\Y$. Then, $X'_nY'_n = X''_nY''_n$ for all $n\in\Z$. Multiply both sides of this equality by $\iof[Y'_n = 0]\iof[Y''_n = 1]$ to obtain $ 0 = X''_n\iof[Y'_n = 0]\iof[Y''_n = 1]$. Since $(\Y, \Y')$ is deterministic and $\X''$ is Kolmogorov, it follows that these two processes are disjoint. In particular, $X''_n \amalg \iof[Y'_n = 0]\iof[Y''_n = 1]$. Hence $0 = \pof[Y'_n = 0, Y''_n = 1]\E X''_n$ which implies that (recall $\X \neq \boldsymbol{0}$) $0 = \pof[Y'_n = 0, Y''_n = 1]$. By symmetry it follows that $Y'_n = Y''_n$. Thus, $\Y' = \Y''$  which concludes the proof.
        \end{proof}
        \begin{rema}{Random variables concentrated on the graph of a function}{graph}
           The following simple observation allows one to generalize \Cref{lemm:filtering} to the case in which $\X$ is not necessarily Kolmogorov. Suppose that $(Y, Z) \sim (X, f(X))$ for some random variables $X, Y, Z$ and some measurable function $f$. Then $Z = f(Y)$. Indeed, we have $p_{Z,Y}(Z\neq f(Y))=p_{X,f(X)}(f(X)\neq f(X))=0$
           %in such a case the conditional distributions $f(X) | X$ and $Z | Y$ coincide, that is $p_{Z, Y}(\cdot, x) = p_{f(X)|X}(\cdot, x) = \delta_{f(x)}$
           and the result follows.
        \end{rema}

        \begin{lemm}{Modify and retrieve}{modify and retrieve}
           Let $(\X, \Y)$ be a stationary process such that both $\X \neq \boldsymbol{0}$, $\Y$ are binary and $\Y$ is deterministic. Suppose additionally that we can find a Kolmogorov binary process $\X' \amalg \Y$ such that $ \X'\Y \sim \X \Y$ conditionally on $\Y = \y$. Then $\Y$ is measurable with respect to $\sigma(\X\Y)$.
        \end{lemm}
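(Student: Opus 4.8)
The plan is to reduce the statement to \Cref{lemm:filtering} through the auxiliary pair $(\X',\Y)$, and then to transport the conclusion back to $(\X,\Y)$ using the distributional hypothesis together with \Cref{rema:graph}.

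First I would apply \Cref{lemm:filtering} to the stationary process $(\X',\Y)$. Here $\X'$ is binary and Kolmogorov (hence $\X'\neq\boldsymbol{0}$, since a Kolmogorov process has positive entropy), $\Y$ is binary and deterministic, and $\X'\amalg\Y$, so all hypotheses are met. That lemma therefore produces a measurable map $\Phi$ with $\Y=\Phi(\X'\Y)$ almost surely; equivalently, the joint law of $(\X'\Y,\Y)$ is carried by the graph of $\Phi$.

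Next I would verify that $(\X\Y,\Y)$ and $(\X'\Y,\Y)$ have the same distribution. The $\Y$-marginal coincides in both couplings, being the law of the fixed deterministic process $\Y$; and by hypothesis the conditional law of $\X'\Y$ given $\Y=\y$ equals that of $\X\Y$ given $\Y=\y$ for almost every $\y$. Disintegrating each joint law over this common $\Y$-marginal and recombining then gives $(\X\Y,\Y)\sim(\X'\Y,\Y)$.

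Finally, writing $W:=\X'\Y$, we have $(\X\Y,\Y)\sim(W,\Phi(W))$, so \Cref{rema:graph} forces $\Y=\Phi(\X\Y)$ almost surely; in particular $\Y$ is $\sigma(\X\Y)$-measurable, which is the claim. The step I expect to require the most care is the identification of the two joint laws: because $\Y$ is an infinite trajectory, the phrase ``conditionally on $\Y=\y$'' must be interpreted through regular conditional probabilities, and one has to confirm that the disintegration and recombination argument is genuinely valid in this setting rather than merely formal.
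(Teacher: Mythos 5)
Your proposal is correct and follows essentially the same route as the paper: apply \Cref{lemm:filtering} to $(\X',\Y)$ to get $\Y=\Phi(\X'\Y)$, observe that $(\X\Y,\Y)\sim(\X'\Y,\Y)$, and conclude via \Cref{rema:graph}. You simply make explicit two points the paper leaves implicit, namely that a Kolmogorov process is nonzero and that the equality of joint laws follows by disintegration over the common $\Y$-marginal.
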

        \begin{proof}
           By \Cref{lemm:filtering} we know that there is a function such that $\Y = f(\X'\Y)$. Moreover, $(\X\Y, \Y) \sim ( \X'\Y, \Y) = (\X'\Y, f(\X' \Y))$. It remains to use \Cref{rema:graph} with $X = \X'\Y$, $Y = \X\Y$ and $Z = \Y$.
        \end{proof}
        Let us present now a simple observation needed in the proof of the  lemma below.
        \begin{prop}{Example of a factor of a Bernoulli which is Bernoulli}{bernoulli factor}
           Suppose that $\X$ is a Bernoulli $\mc{B}(1/2, 1/2)$ process with two point state space $\mc{X}$. Then $\Y$ given by
           \begin{equation}
              Y_i = \iof[X_i \neq X_{i + 1}]
           \end{equation}
           is Bernoulli $\mc{B}(1/2, 1/2)$  as well.
        \end{prop}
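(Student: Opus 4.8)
The plan is to translate the combinatorial definition of $\Y$ into an algebraic one and then reduce the claim to a count of preimages. First I would identify the two-point state space $\mc{X}$ with $\Z/2\Z=\{0,1\}$; this is harmless, since $\iof[X_i\neq X_{i+1}]$ depends only on whether the two coordinates agree, and under this identification it becomes $Y_i=X_i+X_{i+1}\pmod 2$. Thus $\Y$ is the ``discrete derivative'' of the i.i.d.\ fair sequence $\X$.

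Since the law of a stationary $\{0,1\}$-valued process is determined by its finite cylinder probabilities, it suffices to show that for all $m\le n$ and every block $(y_m,\dots,y_n)\in\{0,1\}^{n-m+1}$ one has $\pof[Y_m=y_m,\dots,Y_n=y_n]=2^{-(n-m+1)}$, which is exactly the Bernoulli $\mc{B}(1/2,1/2)$ value. I would compute the left-hand side by counting the $\X$-configurations that produce the target block: the equations $X_i+X_{i+1}=y_i$ (for $m\le i\le n$) form a triangular system in the unknowns $X_m,\dots,X_{n+1}$, in which $X_m$ is free (two choices) and each $X_{i+1}=X_i+y_i$ is then forced. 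Hence exactly $2$ of the $2^{\,n-m+2}$ possible values of $(X_m,\dots,X_{n+1})$ yield the block; as $\X$ is i.i.d.\ uniform, each such configuration of these $n-m+2$ coordinates has probability $2^{-(n-m+2)}$, and summing gives $2\cdot 2^{-(n-m+2)}=2^{-(n-m+1)}$, as required.

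As a cleaner alternative (and a useful cross-check) I would argue by conditioning: the past $\sigma(Y_i\colon i\le n-1)$ is contained in $\sigma(X_j\colon j\le n)$, whereas $Y_n=X_n+X_{n+1}$ with $X_{n+1}$ uniform and independent of $\sigma(X_j\colon j\le n)$. Adding a fixed bit to a uniform bit leaves it uniform, so $Y_n$ is uniform and independent of $\sigma(X_j\colon j\le n)$, hence of its own past $\sigma(Y_i\colon i\le n-1)$. By stationarity this says precisely that $\Y$ is i.i.d.\ fair.

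I do not expect a genuine obstacle here; the only point requiring care is the bookkeeping in the count, namely that the triangular system has exactly one degree of freedom (the initial coordinate $X_m$), so that there is no overcounting and the single leftover factor of $2$ is correctly absorbed into the cylinder probability.
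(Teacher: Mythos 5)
Your proof is correct, and both of your arguments work. Your first argument is close in spirit to the paper's: the paper also counts $\X$-configurations, but it only computes the single cylinder $\pof[Y_0^{n-1}=\mathbf{1}]=\pof[X_i\neq X_{i+1},\ 0\le i\le n-1]=2^{-n}$ by observing that exactly two of the $2^{n+1}$ sequences in $\mc{X}^{n+1}$ are alternating, and then invokes the shortcut that for a binary process it suffices to check that the all-ones cylinders factorize. Your version identifies $\mc{X}$ with $\Z/2\Z$, writes $Y_i=X_i+X_{i+1}$, and solves the triangular system to count preimages of an \emph{arbitrary} block, thereby computing every cylinder probability directly; this is slightly longer but self-contained, and it sidesteps the (true but mildly delicate) fact that for $\{0,1\}$-valued variables it is enough to verify $\pof[Y_i=1\ \forall i\in S]=\prod_{i\in S}\pof[Y_i=1]$ over all finite index sets $S$, which the paper states only for intervals. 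Your second argument --- that $\sigma(Y_i\colon i\le n-1)\subset\sigma(X_j\colon j\le n)$ while $Y_n=X_n+X_{n+1}$ is uniform and independent of $\sigma(X_j\colon j\le n)$ because $X_{n+1}$ is a fresh fair bit --- is a genuinely different and arguably cleaner route (an innovations argument), which in addition generalizes immediately to show that adding an independent uniform bit at each step produces an i.i.d.\ output. Either argument is a valid replacement for the paper's proof.
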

        \begin{proof}
           Since $\Y$ is binary it is enough to check if
           \begin{equation}
              \pof[Y_0^{n - 1} = 1] = \prod_{i = 0}^{n - 1}\pof[Y_i = 1] = 2^{-n}.
           \end{equation}
           By the very definition,
           \begin{equation}
              \pof[Y_0^{n-1} = 1] = \pof[\forall_{0 \le i \le n - 1}\; X_i \neq X_{i + 1} ].
           \end{equation}
           But there are only two alternating sequences (on $\mc{Y}^{n + 1}$) of length $n + 1$. The result follows.
        \end{proof}
        \begin{lemm}{Skew process $\X'$}{plateau}
           Let $\X \amalg \Y$ be stationary processes such that $\X$ is i.i.d., $X_i \in \mc{X} = \{a, b\}$ ($a, b\in \C$, $a \neq b$) and the binary process $\Y$ is deterministic.
           Let $\boldsymbol{K}=(K_n)_{n\in\Z}$ be the non-decreasing sequence of random times defined by
           $K_0 := 0$, and for $n\geq1$, $K_n := \sum_{i = 0}^{n - 1} Y_i$ and $K_{-n} := -\sum_{i = 1}^{n} Y_{-i}$ (so that $K_{n+1}=K_n+Y_n$ for all $n$). We assume that we have almost surely
           $$ \lim_{n\to \infty} K_{-n}=-\infty \text{ and }\lim_{n\to \infty} K_{n}=\infty.$$
           We define the process $\X'$ via $X'_i = X_{K_i}$. Then $(\X', \Y)$ is stationary, and
           \begin{equation}\label{pinskers}
              \Pi(\X', \Y) = \Pi(\X') = \sigma(\Y).
           \end{equation}
        \end{lemm}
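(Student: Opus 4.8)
The plan is to prove the three assertions in turn --- stationarity, then $\Pi(\X',\Y)=\sigma(\Y)$, then $\Pi(\X')=\sigma(\Y)$ --- and to isolate the measurability of $\Y$ with respect to $\sigma(\X')$ as the one genuinely delicate point.

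\medskip
\noindent\textbf{Stationarity and identification of the system.} First I would realise $(\X',\Y)$ as the process generated by the skew product $\widetilde S(\x,\y):=(S^{y_0}\x,\,S\y)$ on $\mc X^{\Z}\times\mc Y^{\Z}$, equipped with $\beta\otimes\lambda$ (where $\beta$ is the i.i.d.\ law of $\X$ and $\lambda$ the law of $\Y$) and the observable $(\x,\y)\mapsto(x_0,y_0)$. Indeed the recursion $K_{n+1}=K_n+Y_n$ gives $\widetilde S^{\,n}(\x,\y)=(S^{K_n}\x,\,S^n\y)$, so reading the zeroth coordinate returns $(X_{K_n},Y_n)=(X'_n,Y_n)$. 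Since $S$ preserves $\beta$, conditioning on $\y$ shows $\beta\otimes\lambda$ is $\widetilde S$-invariant, whence $(\X',\Y)$ is stationary. I would also record that $\sigma(\X',\Y)=\sigma(\X,\Y)$: the inclusion $\subseteq$ is clear since $\X'$ is a function of $(\X,\Y)$, while for $\supseteq$ note that $\boldsymbol K$ has increments in $\{0,1\}$ and tends to $\pm\infty$, so it takes every integer value; hence $X_m=X'_n$ for any $n$ with $K_n=m$ recovers $\X$ from $(\X',\Y)$. Thus $(\X',\Y)$ and $(\X,\Y)$ generate the same dynamical system.

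\medskip
\noindent\textbf{The joint Pinsker algebra.} Because $\X\amalg\Y$ with both processes stationary, the system generated by $(\X,\Y)$ is the direct product of the Bernoulli system of $\X$ and the zero-entropy system of $\Y$. The Pinsker $\sigma$-algebra of a product is the product of the Pinsker $\sigma$-algebras (as used in \Cref{exam:e:p2}); since $\X$ is Kolmogorov its Pinsker is trivial, while $\Pi(\Y)=\sigma(\Y)$ as $\Y$ is deterministic. Therefore $\Pi(\X,\Y)=\sigma(\Y)$, and by the identification of the two systems above, $\Pi(\X',\Y)=\sigma(\Y)$.

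\medskip
\noindent\textbf{The marginal Pinsker algebra; the main obstacle.} The hardest step is to show that $\Y$ is already $\sigma(\X')$-measurable. Granting this, $\sigma(\Y)$ is an $S$-invariant zero-entropy sub-$\sigma$-algebra of $\sigma(\X')$, hence $\sigma(\Y)\subseteq\Pi(\X')$; conversely $\Pi(\X')\subseteq\Pi(\X',\Y)=\sigma(\Y)$ since the Pinsker algebra of a factor is contained in that of the ambient system, and equality $\Pi(\X')=\sigma(\Y)$ follows. To prove measurability I would form the relatively independent self-joining $\lambda$ of the system over $\mc E:=\sigma(\X')$; this is shift-invariant, and it produces two copies $(\X,\Y)$ and $(\X^*,\Y^*)$ that are conditionally independent given $\mc E$ and share the common $\X'=\X'^{*}$. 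By the relatively independent coupling characterisation recalled earlier (conditions (i)$\Leftrightarrow$(ii)), it suffices to show $\Y=\Y^{*}$ $\lambda$-a.s. The leverage is disjointness: $\X$ is Bernoulli while the pair $(\Y,\Y^{*})$ generates a zero-entropy system (a joining of two deterministic systems), so by Furstenberg disjointness \cite{Fu} one has $\X\amalg(\Y,\Y^{*})$ under $\lambda$. Now on $\{Y_n=1,\,Y^{*}_n=0\}$ the relation $Y^{*}_n=0$ forces $X'_{n+1}=X^{*}_{K^{*}_{n+1}}=X^{*}_{K^{*}_{n}}=X'_n$ through the second copy, while $Y_n=1$ gives $X'_n=X_{K_n}$ and $X'_{n+1}=X_{K_n+1}$ through the first, so $X_{K_n}=X_{K_n+1}$ is forced; but $K_n\neq K_n+1$ and $\X$ is nondegenerate i.i.d.\ independent of $(\Y,\Y^{*})$ (here I use that $\X$ genuinely takes both values, so $p:=\PP(X_0=a)\in(0,1)$), whence conditionally on $(\Y,\Y^{*})$ this coincidence has probability $p^2+(1-p)^2<1$. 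Comparing the two computations yields $\lambda(Y_n=1,Y^{*}_n=0)=(p^2+(1-p)^2)\,\lambda(Y_n=1,Y^{*}_n=0)$, forcing the event to be null; symmetry handles $\{Y_n=0,Y^{*}_n=1\}$, so $\Y=\Y^{*}$ a.s. The main obstacle is precisely this recoverability of $\Y$ from $\X'$: the naive attempt fails because an actual jump ($Y_n=1$) need not produce an observable change in $\X'$, and it is the disjointness argument --- leveraging that the occasional genuine changes occur with positive probability and cannot be masked within the rigid deterministic clock --- that overcomes it.
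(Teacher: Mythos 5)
Your stationarity argument and the observation that $\X$ is recoverable from $(\X',\Y)$ (since $\boldsymbol{K}$ visits every integer) are correct, and your overall architecture parallels the paper's. But there is a genuine gap at the step computing $\Pi(\X',\Y)$. From $\sigma(\X',\Y)=\sigma(\X,\Y)$ you conclude that the two processes ``generate the same dynamical system'' and hence have the same Pinsker algebra. They do not: $(\X,\Y)$ is stationary for $S\times S$ while $(\X',\Y)$ is stationary for the skew product $\widetilde S(\x,\y)=(S^{y_0}\x,S\y)$, and the remote past $\Pi(\cdot)=\bigcap_n\sigma(\,\cdot_{\le -n})$ depends on the filtration of the process (equivalently, on the transformation), not merely on the generated $\sigma$-algebra. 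The identity ``Pinsker of a product equals the product of the Pinskers'' applies to $(S\times S,\beta\otimes\lambda)$ and says nothing about $\widetilde S$. The inclusion $\Pi(\X',\Y)\subseteq\sigma(\Y)$ is precisely the point where the paper conditions on $\Y=\y$: any $F\in\Pi(\X',\Y)$ is of the form $f_n(X'_{\le -n},Y_{\le -n})$, which given $\Y=\y$ becomes a function of $X_{\le k_{-n}}$ with $k_{-n}\to-\infty$, hence is conditionally constant by triviality of $\Pi(\X)$. Your argument skips this and replaces it with an identification that is false in general.

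The third part has a second gap at the disjointness step. Your joining $\lambda$ is the relatively independent self-joining over $\sigma(\X')$, which is invariant only for $\widetilde S\times\widetilde S$ (for no other natural map is $\sigma(\X')$ invariant). Under $\widetilde S$ the coordinate process $\X$ is \emph{not} stationary ($X_0\circ\widetilde S^{\,n}=X'_n$, not $X_n$), and the first-coordinate $\sigma$-algebra is not $\widetilde S$-invariant; so $\X$ is not a factor of the joined system, the law of $\bigl(\X,(\Y,\Y^{*})\bigr)$ under $\lambda$ is not a joining of a Bernoulli system with a zero-entropy system, and Furstenberg disjointness cannot be invoked to get $\X\amalg(\Y,\Y^{*})$. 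Granting that independence, the rest of your computation (forcing $X_{K_n}=X_{K_n+1}$ on $\{Y_n=1,\,Y^{*}_n=0\}$ and concluding the event is null) is sound and is the same combinatorial mechanism the paper exploits; but the independence is exactly the missing ingredient. The paper circumvents this by passing to the change-detection process $C_n=\mathbf{1}_{[X'_{n-1}\neq X'_n]}$, which \emph{is} a stationary function of $\X'$, showing that conditionally on $\Y=\y$ it is distributed as $\boldsymbol{B}\Y$ with $\boldsymbol{B}$ an independent Bernoulli, and then running the disjointness argument in the genuinely stationary setting of \Cref{lemm:filtering} via \Cref{lemm:modify and retrieve}. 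Both of your shortcuts land on true statements, but neither is proved by the argument you give.
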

        \begin{rema}{Why the name skew process?}{}
           Clearly, by the very definition, we can look at $\X'$ as a version of $\X$ skewed by $\Y$. Moreover, $\X'$ arises as a coordinate in a skew product of dynamical systems (see \eqref{skew} below, which also justifies that $(\X', \Y)$ is stationary).
        \end{rema}
        \begin{rema}{$\X'$ is Sarnak}{recipe}
           The above lemma is a recipe for producing Sarnak processes: indeed the centered skew process $\X' - \E X_0$ is Sarnak. To see this, note that $X'_0 = X_0$, and
           \begin{equation}
              \cmv[X'_0][\Pi(\X')] \overset{\eqref{pinskers}}{=} \cmv[X_0][\Pi(\Y)] \overset{\X \amalg \Y}{=} \E X_0.
           \end{equation}
        \end{rema}
        \begin{proof}[Proof of \Cref{lemm:plateau}]
           Firstly, we show $\Pi(\X', \Y) = \sigma(\Y)$. Clearly, $\Pi(\X', \Y) \supset \Pi(\Y)=\sigma(\Y)$. Hence it remains to show that $\Pi(\X', \Y) \subset \sigma(\Y)$. Intuitively this is clear because $\Pi(\X)$ is trivial so as it comes to some remote past of $(\X', \Y)$, the past of $\X'$ brings no additional information to the past of $\Y$. Formally, take some random variable $F$ measurable with respect to $\Pi(\X', \Y)$. By the definition of the tail $\sigma$-algebra, for every $n\in\N$, there exists a measurable function $f_n$ such that
           \begin{equation}
              F = f_n(X'_{\le -n}, Y_{\le -n}).
           \end{equation}
           Consider this equation conditionally on $\Y = \y$. Then, by the very definition of $\X'$, there exists $f_{n, \y}$ (naturally defined via bijection correspondence $f_{n, \Y}(X_{\le K_{-n}}) =f_n(X'_{\le -n}, Y_{\le -n})$) such that
           \begin{equation}\label{tail}
              F  = f_{n, \y}(X_{\le k_{-n}}),
           \end{equation}
           where $\boldsymbol{k}$ is the value of $\boldsymbol{K}$ under $\Y = \y$. However, $k_{-n}\xrightarrow{n \to\infty} -\infty$. Since $\X \amalg \Y$ and
           \eqref{tail} holds for any $n$, we conclude that conditionally on $\Y = \y$, $F \in \Pi(\X)$. However, $\Pi(\X)$ is trivial and hence conditionally on $\Y = \y$, $F$ is a constant. Therefore, (unconditionally) $F$ is $\sigma(\Y)$-measurable.

           Now, we take care of $ \Pi(\X') = \sigma(\Y)$. By the previous step, $ \Pi(\X') \subset \Pi(\X', \Y) = \sigma(\Y)$. It remains to show that $\Pi(\X') \supset  \sigma(\Y)$. To this end, define a process $\boldsymbol{C}$ which checks when $\X'$ changes, that is,

           \begin{equation}
              C_n := \iof[X'_{n- 1} \neq X'_n].
           \end{equation}
           Note that, if we could recover the process $\Y$ from $\boldsymbol{C}$, then it would be the end of the proof. Indeed, in such a case,  $\boldsymbol{C}$ is a function of $\X'$ and thus  $\Y = f(\X')$. %Now, the idea  is to cleverly use \Cref{lemm:modify and retrieve}.
           To achieve this, using the fact that $C_n = 1$ iff $Y_n = 1$ and $X_{K_n - 1} \neq X_{K_n}$, we can express $C_n$ as
           \begin{equation}
              C_n = Y_n\iof[X_{K_n - 1} \neq X_{K_n}].
           \end{equation}
           We claim that, conditionally on $\Y = \y$, the process  $\boldsymbol{C}$ is distributed as $\boldsymbol{B}\Y$ where $\mc{B}(1/2, 1/2) \sim \boldsymbol{B} \amalg \Y$. For this, for a fixed $n\in\N$, and fixed $y,w\in\{0,1\}^\Z$, we compare $\pof[ C_{-n}^n=w_{-n}^n\,|\, Y=y]$ and $\pof[ (BY)_{-n}^n=w_{-n}^n\,|\, Y=y]$. Clearly, both vanish if there exists $j\in\{-n,\ldots,n\}$ such that $y_j=0$ but $w_j=1$. Otherwise, let
           $$ \{j_1<j_2<\ldots<j_\ell\} :=  \bigl\{j\in\{-n,\ldots,n\}:\ y_j=1\bigr\}. $$
           Since we assume now that $w_{-n}^n\leq y_{-n}^n$, we have
           $$ \pof[(BY)_{-n}^n=w_{-n}^n\,|\, Y=y] = \pof[B_{j_i}=w_{j_i}\text{ for all }i\in\{1\ldots,\ell\}] = \frac{1}{2^\ell}. $$
           On the other hand, denoting again by $\boldsymbol{k}$ the value of $\boldsymbol{K}$ under $\Y = \y$, since $k_{j_1}<k_{j_2}<\ldots<k_{j_\ell}$, by \Cref{prop:bernoulli factor} we also have
           $$ \pof[C_{-n}^n=w_{-n}^n\,|\, Y=y] = \pof[\iof[X_{k_{j_i} - 1} \neq X_{k_{j_i}}]=w_{j_i}\text{ for all }i\in\{1\ldots,\ell\}] = \frac{1}{2^\ell}. $$
           This completes the proof of the claim, and then it is enough to apply \Cref{lemm:modify and retrieve} to conclude.
        \end{proof}
    \subsection{Proof of \Cref{theo:p:thierry}}
        By a non-ergodic version of Jewett-Krieger theorem (see \cite{Al-Se}, Theorem 1.2), each aperiodic, zero entropy system can be realized as a binary deterministic process.
        Thus, in view of \Cref{rema:recipe}, to finish the proof, it is enough to construct an appropriate processes from \Cref{lemm:plateau}. To do so consider the full shift $S$ on $\{-1,1\}^{\Z}$ with the Bernoulli measure $\nu:=\mc{B}\left(1/2, 1/2\right)$ and some %ergodic
    aperiodic zero-entropy system $(\{0,1\}^\Z,\mu,S)$. 
    On the product space $\{-1,1\}^{\Z}\times \{0,1\}^\Z$, let $\overline{T}$ be the corresponding skew product (considered with the product measure $\nu\otimes \mu$):
        \begin{equation}\label{skew}
           \overline{T}(\x,\y):=(S^{y_0}\x, S\y).
        \end{equation}

        Let $Y_n$, $n\in\Z$, (respectively $X_k$, $k\in\Z$) denote the projection on the $n$-coordinate on $\{0,1\}^\Z$ (respectively, on the $k$-coordinate on $\{-1,1\}^{\Z}$). Then, we construct the stationary process $\X'$ by setting $X'_n:=X_0\circ\overline{T}^n$.
        Note that for all $n\in\Z$ we can write $X'_n=X_{K_n}$, where $\boldsymbol{K}$ is $\Y$-measurable, and recursively defined by
        \begin{itemize}
           \item $K_0:=0$,
           \item For each $n\ge1$, $K_n:=K_{n-1}+Y_n$,
           \item For each $n\le -1$, $K_n:=K_{n+1}-Y_{n+1}$.
        \end{itemize}
        It remains to note that, by aperiodicity of the system we started from, we have $\mu(y_n=0\text{ for all }n)=0$. Therefore, for almost every ergodic component $\lambda$ of the system $(\{0,1\}^\Z,\mu,S)$, we have $\lambda(y_0=1)>0$, and then by the pointwise ergodic theorem we get that $K_n \xrightarrow{n\to \pm\infty} \pm\infty$ almost surely.

\subsection{Background for the proof of \Cref{theo:t:EDM}}
The following result has been proved in \cite{Go-Le-Ru}:

\begin{prop}{}{p:ed1} Let $R$ be an automorphism of $\zdk$. Let $\kappa=\int_{\Gamma}\kappa_\gamma\,dQ(\gamma)$ be the ergodic decomposition of $\kappa$. Then there exists an $R$-invariant $\sigma$-algebra $\mathcal{C}\subset\mathcal{D}$ such that $\Pi(\kappa)=\mathcal{C}$ $\kappa$-a.e.\ and for $Q$-a.a.\ $\gamma\in \Gamma$, we have $\Pi(\kappa_\gamma)=\mathcal{C}$ $\kappa_\gamma$-a.e. Moreover, for each $f\in L^1(\kappa)$, there exists a $\mathcal{C}$-measurable $g$ such that $\E_\kappa(f|\mathcal{C})=g$ $\kappa$-a.e.\ and, for $Q$-a.a.\ $\gamma\in \Gamma$, we have $\E_{\kappa_\gamma}(f|\mathcal{C})=g$ $\kappa_\gamma$-a.e.\end{prop}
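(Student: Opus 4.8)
The plan is to exhibit a single sub-$\sigma$-algebra $\mathcal{C}\subset\mathcal{D}$ and to identify it with both $\Pi(\kappa)$ and with the $\Pi(\kappa_\gamma)$ by playing the affineness of entropy against the maximality of the Pinsker algebra. I write $h_\mu(R,\alpha)$ for the metric entropy of a finite partition $\alpha$ and use the description of the Pinsker algebra recalled in \eqref{defPi}, $\Pi(\mu)=\sigma\{B:h_\mu(R,\{B,B^c\})=0\}$. The one tool binding the decomposition together is the affineness of entropy over the ergodic decomposition: for every finite $\alpha$, $h_\kappa(R,\alpha)=\int_\Gamma h_{\kappa_\gamma}(R,\alpha)\,dQ(\gamma)$ (see \cite{Wa}, \cite{Do}); since the integrand is nonnegative, $h_\kappa(R,\{B,B^c\})=0$ holds iff $h_{\kappa_\gamma}(R,\{B,B^c\})=0$ for $Q$-a.e.\ $\gamma$.

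First I would set $\mathcal{C}:=\sigma\{B\in\mathcal{D}:h_\kappa(R,\{B,B^c\})=0\}$, which by \eqref{defPi} equals $\Pi(\kappa)$ and is a bona fide $R$-invariant sub-$\sigma$-algebra of $\mathcal{D}$; this settles the assertion $\Pi(\kappa)=\mathcal{C}$. Because every $R$-invariant set has vanishing binary entropy, the ergodic-decomposition $\sigma$-algebra $\mathcal{I}$ (whose atoms are the ergodic components, so that $\{\kappa_\gamma\}$ is the disintegration of $\kappa$ over $\mathcal{I}$) satisfies $\mathcal{I}\subseteq\mathcal{C}$; I will need this for the ``moreover'' clause. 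The inclusion $\mathcal{C}\subseteq\Pi(\kappa_\gamma)$ for $Q$-a.e.\ $\gamma$ is then immediate: choosing a countable family $\{B_j\}$ generating $\mathcal{C}$, each $B_j$ has $h_\kappa(R,\{B_j,B_j^c\})=0$, so by affineness $h_{\kappa_\gamma}(R,\{B_j,B_j^c\})=0$ outside a $Q$-null set; discarding the countable union of these null sets places every $B_j$, hence all of $\mathcal{C}$, inside $\Pi(\kappa_\gamma)$.

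The crux is the reverse inclusion $\Pi(\kappa_\gamma)\subseteq\mathcal{C}$ for $Q$-a.e.\ $\gamma$, i.e.\ that no component carries ``extra'' zero-entropy sets invisible to $\kappa$. Here I would argue by assembly: if $\gamma\mapsto B_\gamma$ is a measurable field with $B_\gamma\in\Pi(\kappa_\gamma)$, then the glued set $B:=\{z:z\in B_{\gamma(z)}\}$ (well defined since $\mathcal{I}\subseteq\mathcal{D}$ furnishes the $\mathcal{I}$-measurable map $z\mapsto\gamma(z)$) lies in $\Pi(\kappa)=\mathcal{C}$. Indeed, on the fiber over $\gamma$ the partition $\{B,B^c\}$ agrees $\kappa_\gamma$-a.e.\ with $\{B_\gamma,B_\gamma^c\}$, so $h_{\kappa_\gamma}(R,\{B,B^c\})=0$, whence $h_\kappa(R,\{B,B^c\})=\int_\Gamma h_{\kappa_\gamma}(R,\{B,B^c\})\,dQ=0$. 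Applying this to a countable family of fields generating $\Pi(\kappa_\gamma)$ on $Q$-a.e.\ fiber produces countably many sets of $\mathcal{C}$ whose fiberwise restrictions generate $\Pi(\kappa_\gamma)$; discarding one more $Q$-null set yields $\Pi(\kappa_\gamma)\subseteq\mathcal{C}$ mod $\kappa_\gamma$, and combined with the previous paragraph, $\Pi(\kappa_\gamma)=\mathcal{C}$ mod $\kappa_\gamma$ for $Q$-a.e.\ $\gamma$.

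The main obstacle is precisely the measurability input hidden in the assembly step: one must know that the field $\gamma\mapsto\Pi(\kappa_\gamma)$ is measurable and admits a countable generating family of measurable selections $\gamma\mapsto B^{(i)}_\gamma\in\Pi(\kappa_\gamma)$, together with joint measurability of $(\gamma,z)\mapsto\mathbf{1}_{B^{(i)}_\gamma}(z)$. I expect to obtain this from a measurable version of the Rokhlin--Sinai construction of a perfect (exhaustive) $\sigma$-algebra (see \cite{Wa}), realizing $\Pi(\kappa_\gamma)$ as a decreasing intersection of conditional expectations depending measurably on $\gamma$, or alternatively from a von Neumann-type measurable selection applied to the Borel set $\{(\gamma,B):h_{\kappa_\gamma}(R,\{B,B^c\})=0\}$, using that $(\gamma,B)\mapsto h_{\kappa_\gamma}(R,\{B,B^c\})$ is measurable. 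Finally, the ``moreover'' clause follows with little extra work from $\mathcal{I}\subseteq\mathcal{C}$: for $f\in L^1(\kappa)$ set $g:=\E_\kappa(f\mid\mathcal{C})$; since $\{\kappa_\gamma\}$ disintegrates $\kappa$ over the smaller algebra $\mathcal{I}\subseteq\mathcal{C}$, conditioning on $\mathcal{C}$ may be computed within the fibers, so that $g=\E_{\kappa_\gamma}(f\mid\mathcal{C})$ $\kappa_\gamma$-a.e.\ for $Q$-a.e.\ $\gamma$ by uniqueness of conditional expectation.
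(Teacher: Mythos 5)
First, a point of comparison: the paper itself contains no proof of this proposition --- it is imported verbatim from the preprint \cite{Go-Le-Ru} (``The following result has been proved in [Go-Le-Ru]''), so your attempt can only be measured against what a complete argument must supply. Much of your outline is sound: the identity $h_\kappa(R,\alpha)=\int_\Gamma h_{\kappa_\gamma}(R,\alpha)\,dQ(\gamma)$ is indeed standard (Jacobs' theorem, see \cite{Wa}), it correctly yields the inclusion $\mathcal{C}\subseteq\Pi(\kappa_\gamma)$ for $Q$-a.e.\ $\gamma$ via a countable family of zero-entropy generators of $\Pi(\kappa)$; the gluing computation for a measurable field $\gamma\mapsto B_\gamma$ is correct; and your localization of conditional expectations from $\mathcal{I}\subseteq\mathcal{C}$ (fix a genuinely $\mathcal{C}$-measurable version of $g=\E_\kappa(f\mid\mathcal{C})$, test against a countable algebra generating $\mathcal{C}$, one $Q$-null set per generator, then monotone class) gives the ``moreover'' clause.

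The genuine gap is exactly where you flag it, and it is not a technicality you can expect to import: after your (easy) gluing step, a measurable field with jointly measurable indicator is nothing other than a single Borel set $B\subseteq Z$, so the ``measurable selection input'' you defer --- countably many Borel sets $B^{(i)}$ whose traces generate $\Pi(\kappa_\gamma)$ modulo $\kappa_\gamma$ for $Q$-a.e.\ $\gamma$ --- is, combined with the easy inclusion, precisely the hard half of the proposition. Your proposal thus reduces the statement to an unproven claim of equivalent strength. Moreover, the von Neumann-type route founders concretely: the collection of Borel subsets of $Z$ carries no standard Borel structure, so the set $\{(\gamma,B):h_{\kappa_\gamma}(R,\{B,B^c\})=0\}$ is not even a well-formed object for a selection theorem; and one cannot substitute a common measure algebra, since the ergodic components $\kappa_\gamma$ are mutually singular and in general $\kappa_\gamma\not\ll\kappa$ (for the identity map the components are point masses, none absolutely continuous with respect to Lebesgue measure). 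The way to close the gap --- and the serious reading of your Rokhlin--Sinai hint --- is to avoid selection altogether by exhibiting one \emph{set-theoretic} model of the Pinsker algebra valid for all invariant measures simultaneously: realize the system as a shift with a generating partition (in this paper's application the system is already a finite-state shift, as in \eqref{defPi} and the definition of $\Pi(\X)$) and put $\mathcal{C}:=\bigcap_{n\ge 0}\sigma(\pi_k:k\le -n)$, the genuine remote past. The Rokhlin--Sinai theorem then gives $\Pi(\mu)=\mathcal{C}$ mod $\mu$ for \emph{every} shift-invariant $\mu$ at once --- in particular for $\kappa$ and all of its ergodic components, with no exceptional-set bookkeeping --- and your conditional-expectation localization goes through verbatim with this fixed $\mathcal{C}$. (For a general automorphism one must first reduce to such a symbolic model via a generator theorem, which is where the remaining technical work lies.)
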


Assume now that $\mathcal{X}\subset\C$ is finite and let $\mu$ be an $S$-invariant measure on $\mathcal{X}^{\Z}$. Then the coordinate projection process $(\pi_n)=((\pi_n),\mu)$ is stationary (and has distribution $\mu$). Assume that $\mu=\int_{\Gamma}\mu_\gamma\,dQ(\gamma)$ is the ergodic decomposition of $\mu$.

\begin{lemm}{}{l:ed2} The stationary process $((\pi_n),\mu)$ is Sarnak if and only if the stationary processes $((\pi_n),\mu_\gamma)$  are Sarnak for $Q$-a.e.\ $\gamma\in\Gamma$.\end{lemm}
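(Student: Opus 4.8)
The plan is to reduce the Sarnak property to a statement about the conditional expectation $\cmv[\pi_0][\Pi(\cdot)]$ vanishing, and then to transfer this statement between the global measure $\mu$ and its ergodic components $\mu_\gamma$ using \Cref{prop:p:ed1}. Recall that by \Cref{defi:d:SP}, the process $((\pi_n),\mu)$ is Sarnak precisely when $\cmv[\pi_0][\Pi(\mu)][\mu]=0$ $\mu$-a.e., and likewise $((\pi_n),\mu_\gamma)$ is Sarnak iff $\cmv[\pi_0][\Pi(\mu_\gamma)][\mu_\gamma]=0$ $\mu_\gamma$-a.e. The whole point of invoking \Cref{prop:p:ed1} is that it produces a \emph{single} $S$-invariant $\sigma$-algebra $\mathcal{C}$ that simultaneously realizes $\Pi(\mu)$ (mod $\mu$) and each $\Pi(\mu_\gamma)$ (mod $\mu_\gamma$), together with a single function $g$ realizing the relevant conditional expectation at both levels. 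This is the tool that makes the two notions of ``remote past'' compatible across the decomposition.

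First I would apply \Cref{prop:p:ed1} to $R=S$, $\kappa=\mu$, and the function $f=\pi_0\in L^1(\mu)$ (here $\pi_0$ is bounded since $\mc{X}$ is finite, so it lies in every $L^p$). This yields an $S$-invariant $\sigma$-algebra $\mathcal{C}$ with $\Pi(\mu)=\mathcal{C}$ $\mu$-a.e.\ and $\Pi(\mu_\gamma)=\mathcal{C}$ $\mu_\gamma$-a.e.\ for $Q$-a.a.\ $\gamma$, and a single $\mathcal{C}$-measurable $g$ with $\cmv[\pi_0][\mathcal{C}][\mu]=g$ $\mu$-a.e.\ and $\cmv[\pi_0][\mathcal{C}][\mu_\gamma]=g$ $\mu_\gamma$-a.e.\ for $Q$-a.a.\ $\gamma$. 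Combining these, the $\mu$-a.e.\ identity $\cmv[\pi_0][\Pi(\mu)][\mu]=g$ and the $\mu_\gamma$-a.e.\ identity $\cmv[\pi_0][\Pi(\mu_\gamma)][\mu_\gamma]=g$ hold with the \emph{same} $g$.

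From here both directions follow quickly. If the global process is Sarnak, then $g=\cmv[\pi_0][\Pi(\mu)][\mu]=0$ $\mu$-a.e.; since $\mu=\int_\Gamma\mu_\gamma\,dQ(\gamma)$, the $\mu$-null set $\{g\neq0\}$ is $\mu_\gamma$-null for $Q$-a.a.\ $\gamma$, so $g=0$ $\mu_\gamma$-a.e., hence $\cmv[\pi_0][\Pi(\mu_\gamma)][\mu_\gamma]=0$ and each such component is Sarnak. Conversely, if $g=\cmv[\pi_0][\Pi(\mu_\gamma)][\mu_\gamma]=0$ $\mu_\gamma$-a.e.\ for $Q$-a.a.\ $\gamma$, then integrating against $dQ(\gamma)$ gives $g=0$ $\mu$-a.e., so $\cmv[\pi_0][\Pi(\mu)][\mu]=0$ and the global process is Sarnak. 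The logical structure is thus an ``iff'' pivoting on the shared representative $g$.

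The main obstacle—and the only genuinely nontrivial point—is precisely that $\Pi(\mu)$ and $\Pi(\mu_\gamma)$ are \emph{a priori} different $\sigma$-algebras (the remote past is defined measure-theoretically, and conditioning on it changes with the measure), so one cannot naively compare $\cmv[\pi_0][\Pi(\mu)]$ with $\cmv[\pi_0][\Pi(\mu_\gamma)]$ component by component. \Cref{prop:p:ed1} is exactly what resolves this: it guarantees a single model $\mathcal{C}$ for all these Pinsker $\sigma$-algebras at once and a single conditional-expectation representative $g$ valid a.e.\ for $\mu$ and for a.e.\ $\mu_\gamma$. Everything else—passing null sets between $\mu$ and its ergodic components in both directions—is the routine measure-disintegration bookkeeping built into writing $\mu=\int_\Gamma\mu_\gamma\,dQ(\gamma)$.
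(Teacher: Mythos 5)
Your proof is correct and follows essentially the same route as the paper: both arguments hinge entirely on \Cref{prop:p:ed1} supplying a single $S$-invariant $\sigma$-algebra $\mathcal{C}$ and a single $\mathcal{C}$-measurable representative $g$ of the conditional expectation valid for $\mu$ and for $Q$-a.e.\ $\mu_\gamma$ simultaneously. The only (cosmetic) difference is in the finish: the paper runs an $L^2$ chain ending in $\int_\Gamma\int|\EE_{\mu_\gamma}(\pi_0|\mathcal{C})|^2\,d\mu_\gamma\,dQ(\gamma)=0$, whereas you pass the null set $\{g\neq0\}$ directly through the disintegration $\mu=\int_\Gamma\mu_\gamma\,dQ(\gamma)$, which is a slightly more streamlined but equivalent way to conclude both directions.
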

\begin{proof}
$\Rightarrow$ We use \Cref{prop:p:ed1} for $(\mathcal{X}^\Z,\mu,S)$ and $f=\pi_0$.  We have
$$
0=\|\EE_\mu(\pi_0|\Pi(\mu))\|_{L^2}^2=\|\EE_\mu(\pi_0|\mathcal{C})\|_{L^2}^2=\int \EE_\mu(\pi_0|\mathcal{C})\ov{\pi}_0\,d\mu=\int g\ov{\pi}_0\,d\mu=
$$$$
\int_\Gamma\Big(\int g \ov{\pi}_0\,d\mu_\gamma\Big)\,dQ(\gamma)=
\int_\Gamma\Big(\int \EE_{\mu_\gamma}(g \ov{\pi}_0|\mathcal{C})\,d\mu_\gamma\Big)\,dQ(\gamma)=
$$$$
\int_\Gamma g\EE_{\mu_\gamma}(\ov{\pi}_0|\mathcal{C})\,d\mu_\gamma\Big)\,dQ(\gamma)=
\int_\Gamma\Big(\int |\EE_{\mu_\gamma}(\pi_0|\mathcal{C})|^2\,d\mu_\gamma\Big)\,dQ(\gamma),$$
whence, for $Q$-a.e.\ $\gamma$, we have
$$
0=\EE_{\mu_\gamma}(\pi_0|\mathcal{C})=\EE_{\mu_\gamma}(\pi_0|\Pi(\mu_\gamma)),$$
so our claim follows.

$\Leftarrow$  The same by reading in the reversed order.\footnote{Or using only the first property of $\mathcal{C}$: for each $h\in L^\infty(\mathcal{C})$,
$$
 \int \E_\mu(\pi_0|\mathcal{C})h\, d\mu=\int \pi_0 h\, d\mu=
\int_{\Gamma}(\int \pi_0 h\, d\mu_\gamma)\, dQ(\gamma)
 =$$$$ \int_{\Gamma}(\int \E_{\mu_\gamma}(\pi_0|\mathcal{C})h d\mu_\gamma)
 dQ(\gamma)=0$$
since $\EE_{\mu_\gamma}(\pi_0|\mathcal{C})=0$. Since $h$ was arbitrary, $\E_\mu(\pi_0|\mathcal{C})=\E_\mu(\pi_0|\Pi(\mu))=0$.}
\end{proof}

\subsection{Proofs of \Cref{theo:t:EDM} and of \Cref{coro:c:EDT}}
\Cref{theo:t:EDM} follows directly from \Cref{lemm:l:ed2}.

\noindent
{\em Proof of \Cref{coro:c:EDT}} Assume first that $\X$ (with distribution $\mu$) is ergodic. Then almost every realization  $(X_n(\omega))$ of the process is generic for the measure $\mu$. But since $\X$ is Sarnak, $X_0\perp \Pi(\mu)$, so the Veech condition is satisfied for the (unique) Furstenberg system of $\bfu=(X_n(\omega))$, and therefore by \cite{Ka-Ku-Le-Ru}, $\bfu$ is orthogonal to all deterministic sequences. If $\X$ is not ergodic then, by \Cref{theo:t:EDM}, we pass to ergodic components  to which we apply the above argument.

%\begin{Th} A stationary process $\underline{X}$ is Sarnak if and only if its ergodic decomposition consists of Sarnak processes.\end{Th}

        \vspace{2ex}

        \noindent
        Faculty of Mathematics and Computer Science\\
        Nicolaus Copernicus University, \\
        Chopin street 12/18, 87-100 Toru\'n, Poland\\
        mlem@mat.umk.pl

        \vspace{2ex}

        \noindent
        Faculty of Physics, Astronomy and Informatics\\
        Nicolaus Copernicus University, \\
        Grudzi\c{a}dzka street 5, 87-100 Toru\'n, Poland\\
        m.lemanczyk@umk.pl

        \vspace{2ex}

        \noindent
        CNRS, Univ Rouen Normandie\\
        LMRS, UMR6085
        F76000 Rouen, France\\
        Thierry.de-la-Rue@univ-rouen.fr

\end{document}